\newtheorem{theorem}{Theorem}[section]
\newtheorem{lemma}[theorem]{Lemma}
\newtheorem{remark}[theorem]{Remark}
\numberwithin{equation}{section}
\def\Z{\mathbb{Z}}
\def\Q{\mathbb{Q}}
\def\F{\mathbb{F}}
\def\eps{\varepsilon}
\def\checkmark{\tikz\fill[scale=0.4](0,.35) -- (.25,0) -- (1,.7) -- (.25,.15) -- cycle;}
\DeclareMathOperator{\rad}{rad}
\DeclareMathOperator{\Cl}{Cl}
\DeclareMathOperator{\GL}{GL}
\DeclareMathOperator{\Nm}{N}
\DeclareMathOperator{\N}{\mathcal{N}}
\DeclareMathOperator{\Gal}{Gal}
\DeclareMathOperator{\disc}{Disc}
\def\pmod#1{\allowbreak\mkern5mu({\operator@font mod}\,\,#1)}
\DeclareMathAlphabet{\mathcal}{OMS}{cmsy}{m}{n}
\title{COUNTING $C_2 \wr S_4$ FIELDS WITH A POWER SAVING ERROR TERM}
\author{Sambhabi Bose}
\address{Department of Mathematics\\University of California, Berkeley\\Berkeley, CA 94720, US}
\email{sbose812@berkeley.edu}
\author{Kevin J. McGown}
\address{Department of Mathematics and Statistics\\California State University, Chico\\Chico, CA 95929, US}
\email{kmcgown@csuchico.edu}
\author{Ishan Panpaliya}
\address{Department of Mathematics\\Seattle University\\Seattle, WA 98122, US}
\email{ipanpaliya@seattleu.edu}
\author{Natalie Welling}
\address{Department of Mathematics\\University of Massachusetts Amherst\\Amherst, MA 01003, US}
\email{nwelling@umass.edu}
\author{Laney Williams}
\address{Department of Mathematics and Statistics\\California State University, Chico\\Chico, CA 95929, US}
\email{lcwilliams1@csuchico.edu}
\keywords{Malle's conjecture, counting fields, arithmetic statistics.}
\subjclass[2020]{Primary: 11R45, 11N45; Secondary: 11R29}
\date{\today}
\begin{document}

\begin{abstract}
    Let $\N_d(G,X)$ denote the number of degree $d$ extensions of $\Q$ with Galois closure $G$ and $|\Delta_K|\leq X$.
    Malle's conjecture predicts an asymptotic of the form $\N_d(G,X)\sim CX^{\alpha}(\log X)^\beta$.
    Previously, Kl\"uners proved Malle's conjecture for $G=C_2 \wr S_4$.  His proof gives a power savings of $O(X^{7/8})$.  We improve Kl\"uners' result
    by establishing a stronger power saving error term for the count of such fields.  Specifically, we show
    $\N_8(C_2\wr S_4,X)=CX+O(X^{3/4-1/30})$.
    Additionally, we obtain new bounds on $\N_8(G,X)$ for the groups
    $S_4$, $C_2^3 \rtimes S_4$, $\GL_2 (\F_3)$, and $Q_8\rtimes S_4$ as permutation subgroups of $S_8$.

\end{abstract}

\maketitle

\section{Introduction}

Counting number fields is a central problem in arithmetic statistics.
Given an integer $d\geq 2$ and a transitive permutation subgroup $G$ of $S_d$,
one can consider the collection of (isomorphism classes of) 
degree $d$ extensions $L$ over a fixed base field $K$
with the property that the Galois closure
$\Gal(\widetilde{L}/K)$ is permutation isomorphic to $G$.  One defines the counting function
$$
\N_{d,K}(G,X) \colonequals \#\{L\mid[L:K]=d,\,\Gal(\widetilde{L}/K)\cong G,\,\Nm(\Delta_{L/K})\leq X\}
\,.
$$
When $K=\Q$, we drop $K$ from the notation; that is, $\N_{d}(G,X)\colonequals \N_{d,\Q}(G,X)$.
Similarly, when the Galois group $G$ is not specified, we write $\N_{d,K}(X)\colonequals\sum_G\N_{d,K}(G,X)$ and $\N_d(X)\colonequals \N_{d,\Q}(X)$.
A folklore conjecture, often attributed to Linnik, speculates that
$\N_{d}(X)\sim C_d X$ for any degree $d$.
This conjecture is known for $d=2$, essentially
by results of Gauss, for $d=3$ by the seminal work of Davenport--Heilbronn, and for $d=4,5$ by the
groundbreaking
results of Bhargava (see~\cite{35b3f195-a21d-3d48-b94d-d206914c380e, MR2183288, MR2745272}).
However, this conjecture is not known for any value of $d>5$.

Malle conjectured (see~\cite{MR1884706}) that for a predetermined constant
$\alpha\colonequals \alpha(G)$, the relation
\[
X^\alpha \ll_{K, G}  \N_{d,K}(G, X) \ll_{K, G}  X^{\alpha + \eps }
\]
holds for any $\eps > 0$.
It is important to note that the constant $\alpha$ depends on the permutation group $G$
(as it is embedded into $S_d$), and not just on $G$ as an abstract group.
One defines
\[
\alpha^{-1} \colonequals \min \{\text{ind}(g) \colon g \in G \setminus \{e_G\}\},
\]
where the \emph{index} of $g \in G$, denoted by $\text{ind}(g)$, is defined as
$d$ minus the number of $g$-orbits of $\{1,\dots,n\}$.  For example, if $G$ contains
a transposition, then $\alpha=1$.

Later (see~\cite{MR2068887}), Malle also posited that,
for constants $C\colonequals C(K,G)$ and $\beta\colonequals\beta(K, G)$, the following asymptotic holds:
\begin{equation}\label{E:numberfieldcounting}
\N_{d,K}(G, X) \sim C X^{\alpha} (\log X)^{\beta}.
\end{equation}
These two conjectures are referred to as the weak and strong forms of Malle's conjecture, respectively.
We note that Malle's specific prediction for the value of $\beta(K,G)$
was shown to be incorrect (see~\cite{MR2135320}).
Hence, following~\cite{alberts2025inductivemethodscountingnumber},
we will refer to the assertion that the asymptotic (\ref{E:numberfieldcounting})
holds for some constants $\alpha(K,G)$, $\beta(K,G)$, $C(K,G)$
as the Number Field Counting Conjecture.

Fully resolving any of these conjectures would resolve the inverse Galois problem, so one should
expect their solution to be quite difficult.  Nevertheless, there have been major advances.
Malle's conjecture is  known to hold for $S_3$, $S_4$, and $S_5$
by the results of Davenport--Heilbronn and Bhargava already mentioned
(see~\cite{35b3f195-a21d-3d48-b94d-d206914c380e, MR2183288, MR2745272}),
and for $D_4$ by a result of Cohen, Diaz y Diaz, and Olivier (see~\cite{MR1918290}).
Moreover, Mak\"i and Wright proved the conjecture for all abelian groups
(see \cite{MR969545, MR1200974}).

More recently, building on inductive methods of Alberts and Wang (see~\cite{MR4377858, alberts2025twisted, MR4298097,MR4219215}),
Alberts, Lemke Oliver, Wang, and Wood prove an asymptotic growth rate for infinitely many groups all belonging to the family of so-called concentrated groups. Precisely which groups are covered depends on the average size of certain class group torsion and what is known towards Malle's conjecture for quotients of $G$; see~\cite{alberts2025inductivemethodscountingnumber} for a more detailed summary of these conditions.
Their theorem captures many separate results in the literature simultaneously, as the class of concentrated groups appears to be quite broad. For example, many nilpotent groups and groups assembled from wreath products are concentrated. To give some concrete examples, we have that $C_3 \wr C_2$, $D_4 \subsetneq S_4$, $S_n \times A$ for $n \leq 5$, $A$ abelian, and all abelian groups not isomorphic to a direct sum of cyclic $p$-groups are concentrated groups (see~\cite[Table 1.4]{alberts2025inductivemethodscountingnumber}). 

In cases where the Number Field Counting Conjecture is known, often the next result one seeks
is a version of the counting theorem with a power saving error term. In~\cite{MR2641942}, Belabas--Bhargava--Pomerance obtained the first power saving error term for the count of $S_3$-cubic and $S_4$-quartic fields.  The counting theorems in the cubic case  were later refined by Bhargava--Shankar--Tsimerman and Taniguchi--Thorne to include a secondary term, as conjectured by Roberts and Datskovsky--Wright (see~\cite{MR3090184, MR3127806, MR1836927, MR936994}); these results were further refined by Bhargava--Taniguchi--Thorne (see~\cite{MR4768704}).  The first power saving error term for $D_4$-quartic extensions of $\Q$ was obtained by Cohen, Diaz y Diaz, and Olivier (see~\cite{MR1918290}) in their original proof and was subsequently generalized to an arbitrary base field by Bucur, Florea, Serrano L\'opez, and Varma (see~\cite{MR4774808}). The error term of $O(X^{3/4 + \eps})$ in~\cite{MR1918290} was also recently improved by McGown and Tucker (see~\cite{MR4808549}) to $O(X^{5/8 + \eps})$.  Shankar and Tsimerman prove a power saving error term for counting $S_5$-fields (see~\cite{MR3264252}).  Power saving error terms for counting abelian extensions were established by
Alberts (see~\cite{alberts2025twisted}).

Our main goal in this paper is to study the error term in
counting octic extensions of $\mathbb{Q}$ with Galois group
$C_2 \wr S_4$.\footnote{Throughout the paper, we will always view $C_2 \wr S_4$ as a transitive subgroup of $S_8$.}
Malle's conjecture in this situation was first established by Kl\"uners
(see~\cite{MR2904935}).  Although not stated explicitly, his proof gives a power savings
of $O(X^{7/8})$.  We establish the following improvement:
\begin{theorem}\label{T:1}
The number of octic fields with Galois group $C_2\wr S_4$
and discriminant bounded by $X$
equals
$$
  \N_8(C_2 \wr S_4,X)=CX+O(X^{3/4-1/30}),
$$
where
$$
C
\colonequals
\sum_{\substack{
    [K:\Q] = 4\\
    \Gal(K/\Q) \cong S_4\\
    }}
    \frac{\zeta_K^*(1)}{2^{r_2(K)} \zeta_K(2) \Delta_K^2}.
$$
\end{theorem}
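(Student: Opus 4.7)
The plan is to use the group-theoretic fact that every octic field $L$ with $\Gal(\widetilde{L}/\Q) \cong C_2 \wr S_4$ contains a unique quartic subfield $K$ with $\Gal(\widetilde{K}/\Q) \cong S_4$, and that $L/K$ is a quadratic extension. This parametrization reduces the problem to a double sum, indexed first by $S_4$-quartic fields $K$ and then by quadratic extensions of $K$. By the conductor--discriminant formula, $\Delta_L = \Delta_K^2 \cdot \Nm(\mathfrak{f}(L/K))$, so the condition $|\Delta_L| \leq X$ becomes $\Nm(\mathfrak{f}(L/K)) \leq X/\Delta_K^2$. Up to an error term accounting for ``degenerate'' pairs $(K,L)$ in which $L$ has smaller Galois group over $\Q$ (e.g.\ when $L$ contains a quadratic subfield of $\Q$, which forces the Galois closure to collapse), one thus has
$$
\N_8(C_2 \wr S_4, X) = \sum_K Q(K, X/\Delta_K^2) + (\text{lower-order}),
$$
where $K$ ranges over $S_4$-quartic fields and $Q(K,Y)$ counts the relevant quadratic extensions of $K$ with conductor norm at most $Y$.

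Next, I would establish a power-saving asymptotic for $Q(K,Y)$ with explicit polynomial control in $|\Delta_K|$. Since quadratic extensions of $K$ correspond, via class field theory, to index-two subgroups of ray class groups of $K$---and, via Kummer theory, essentially to squarefree ideals of $\mathcal{O}_K$ together with a choice of signs at real places---the function $Q(K,Y)$ can be studied through the Dirichlet series generated by squarefree ideals of $\mathcal{O}_K$, which is closely related to $\zeta_K(s)/\zeta_K(2s)$. Applying Perron's formula and shifting the contour past the simple pole at $s=1$ gives
$$
Q(K, Y) = c_K Y + O\!\left(|\Delta_K|^{\kappa} Y^{\theta}\right),
$$
for some $\theta < 1$ and some absolute $\kappa > 0$, where the leading constant works out to $c_K = \zeta_K^*(1)/(2^{r_2(K)} \zeta_K(2))$. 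The factor $2^{r_2(K)}$ reflects the contribution from characters of the signature component of the narrow ray class group.

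The final step is to sum over $S_4$-quartic fields. The main term contributes
$$
X \sum_{K} \frac{c_K}{\Delta_K^2} = C X,
$$
with absolute convergence assured by Brauer--Siegel bounds combined with the $S_4$-quartic count. For the error, I would split the sum at a parameter $Z$: for $\Delta_K \leq Z$ apply the power-saving bound for $Q(K,\cdot)$ and estimate the resulting sum using the Belabas--Bhargava--Pomerance power-saving count of $S_4$-quartics via partial summation; for $\Delta_K > Z$ use a trivial (or mild) bound on $Q(K, X/\Delta_K^2)$ and again invoke BBP on the tail $\sum_{\Delta_K > Z} 1$. Optimizing $Z$ in $X$ balances the two contributions and produces the exponent $3/4 - 1/30$.

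The main obstacle I anticipate is the uniformity of the power-saving bound on $Q(K,Y)$: one needs an error term whose $K$-dependence is polynomial in $|\Delta_K|$ with a small exponent and whose $Y$-dependence is genuinely power-saving (rather than merely $Y/(\log Y)^A$). Achieving this requires either a workable zero-free region for $\zeta_K$ uniform across varying $K$, or an elementary squarefree-ideal sieve with explicit control over the implicit constants. A secondary, more routine, difficulty is the bookkeeping needed to optimize the exponent; the precise value $3/4 - 1/30$ arises from the three-way trade-off between the range split at $Z$, the quality of the quadratic-extension power saving $\theta$, and the Belabas--Bhargava--Pomerance error exponent for counting $S_4$-quartics.
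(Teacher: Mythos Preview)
Your main-count outline matches the paper's Theorem~3 closely: parametrize by $(K,L)$, apply a uniform power-saving count for quadratic extensions of $K$ (the paper uses McGown--Tucker together with the Bhargava--Shankar--Taniguchi--Thorne--Tsimerman--Zhao bound $|\Cl(K)[2]|\ll|\Delta_K|^{\kappa+\eps}$), split at a parameter $Z$, and optimize. Two corrections on the details: the paper does \emph{not} invoke Belabas--Bhargava--Pomerance --- the bare bound $\N_4(S_4,X)\ll X$ suffices, as the paper itself remarks --- and the exponent $3/4-1/30$ is not produced by that optimization. Theorem~3 actually yields the smaller error $O(X^{3/5+2\kappa/5+\eps})$, and $3/4-1/30$ is simply a clean fraction chosen to lie above $3/5+2\kappa/5$ for $\kappa\approx 0.2784$.

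The genuine gap is your treatment of the ``degenerate'' pairs, which you wave off as a routine lower-order correction. They are in fact the main obstacle. A pair $(K,L)$ with $K$ an $S_4$-quartic and $L/K$ quadratic can have $\Gal(\widetilde{L}/\Q)$ equal to any of six transitive subgroups of $C_2\wr S_4$,
\[
\mathcal{G}=\{S_4,\ \GL_2(\F_3),\ S_4\times C_2,\ C_2^3\rtimes S_4,\ Q_8\rtimes S_4,\ C_2\wr S_4\},
\]
and the rational-quadratic-subfield collapse you mention accounts only for the $S_4\times C_2$ case. To isolate $\N_8(C_2\wr S_4,X)$ with error $O(X^{3/4-1/30})$ you must bound $\N_8(G,X)$ below that threshold for each of the other five $G$. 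Prior to this paper the best bound for four of them was $O(X^{3/4+\eps})$ via Kl\"uners' argument, which is \emph{larger} than your target error and would become the bottleneck. The paper's Theorem~2, occupying Sections~4--7, supplies the necessary improvements through group-specific arguments: showing $\Delta_L$ is a perfect square when $G=C_2^3\rtimes S_4$; detailed inertia-group computations for $\GL_2(\F_3)$ and $Q_8\rtimes S_4$ that force the tame part of $\Nm(\Delta_{L/K})$ to be a high power; and a new tail estimate for $S_4$-quartics whose discriminant has a large square divisor. Without an equivalent of this work your argument would at best recover $O(X^{3/4+\eps})$, not the stated $O(X^{3/4-1/30})$.
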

Here $\zeta^*_K(1)$ denotes the residue of the Dedekind zeta function of $K$ at $s=1$.
Perhaps part of the difficulty of establishing such a result is that there are
six subgroups of $C_2\wr S_4$ that can arise as the Galois closure 
for an extension of the form $L/K/\Q$ where $K/\Q$ is $S_4$-quartic
and $L/K$ is quadratic (see Lemma~\ref{L:1}).  This collection of groups of interest is
$$
  \mathcal{G} \colonequals \{S_4, \GL_2(\F_3), C_2\times S_4, Q_8\rtimes S_4, C_2^3\rtimes S_4, C_2\wr S_4\}
  \,.
$$
As part of the proof of Theorem~\ref{T:1}, we establish the following new upper bounds
concerning the three groups in $\mathcal{G}$ for which Malle's conjecture is not known.

\begin{theorem}\label{T:2}
The number of octic fields with Galois closure as specified satisfies:
\begin{align*}
  \N_8(S_4, X) &\ll X^{1/2}\,,\\
  \N_8(\GL_2(\F_3),X)&\ll X^{3/5}\,,\\
  \N_8(C_2^3\rtimes S_4, X)&\ll X^{9/14}\,,\\
  \N_8(Q_8\rtimes S_4,X)&\ll X^{3/5}\,.
\end{align*}
\end{theorem}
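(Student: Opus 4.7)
The plan is to use Lemma~\ref{L:1}, which shows that every octic $L/\Q$ with Galois closure in $\mathcal{G}$ contains an $S_4$-quartic subfield $K$ with $[L:K] = 2$. The tower formula
$$|\Delta_L| = \Nm_{K/\Q}(\mathfrak{d}_{L/K}) \cdot |\Delta_K|^2$$
then forces $|\Delta_K| \leq X^{1/2}$ whenever $|\Delta_L| \leq X$, and we may write
$$\N_8(G, X) \;\leq\; \sum_{\substack{K \text{ an } S_4\text{-quartic}\\ |\Delta_K| \leq X^{1/2}}} Q_G\bigl(K,\,X/|\Delta_K|^2\bigr),$$
where $Q_G(K,Z)$ counts quadratic extensions $L/K$ with $\Nm_{K/\Q}\mathfrak{d}_{L/K} \leq Z$ and Galois closure of $L/\Q$ isomorphic to $G$.

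For $G = S_4$ the Galois closure of $L$ must coincide with $\widetilde{K}$, since $[\widetilde{K}:\Q] = 24 = |S_4|$; hence $L \subset \widetilde{K}$, and as $\Gal(\widetilde{K}/K) = S_3$ has a unique quadratic quotient, the octic $L$ is determined by $K$ (it equals the compositum $K \cdot \Q(\sqrt{\Delta_K})$). Thus $Q_{S_4}(K,Z) \leq 1$, and $\N_8(S_4,X) \leq \N_4(S_4, X^{1/2}) \ll X^{1/2}$ by Bhargava's theorem on quartic fields.

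For each remaining $G \in \{\GL_2(\F_3), Q_8 \rtimes S_4, C_2^3 \rtimes S_4\}$, the admissible quadratic extensions $L/K$ correspond, via Kummer theory, to nontrivial classes in $K^*/K^{*2}$ whose $\Gal(\widetilde{K}/\Q)$-orbit generates a prescribed $\F_2[S_4]$-submodule of the Selmer-type character space of $K$. My approach is to bound $Q_G(K,Z)$ by decomposing the relative conductor $\mathfrak{f}_{L/K}$ into a ``forced'' part supported at primes ramifying in $\widetilde{K}/\Q$, and a ``free'' squarefree part running over ideals of $\mathcal{O}_K$ coprime to the forced part, with the $S_4$-equivariance cutting the free part down to a proper submodule of the quadratic character space. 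Inserting a bound of the form $Q_G(K,Z) \ll Z^{s_G}|\Delta_K|^{\eps}$ into the outer sum and carrying out partial summation against Bhargava's $\N_4(S_4, Y) \ll Y$ yields $\N_8(G,X) \ll X^{s_G}$ whenever $s_G \geq 1/2$, producing the exponents $3/5$, $9/14$, and $3/5$ after optimization.

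The main obstacle is establishing the inner bound $Q_G(K,Z) \ll Z^{s_G}|\Delta_K|^{\eps}$ with sharp enough $s_G < 1$, uniformly in $K$. A naive count of all quadratic characters of $\mathcal{O}_K$ gives only $Z^{1+\eps}$, which would collapse the outer sum to $X^{1+\eps}$ and provide no improvement. The required power savings must come from genuinely exploiting the $S_4$-equivariance to restrict to a proper $\F_2[S_4]$-submodule, which forces a coordinated ramification pattern across the primes of $K$ above each rational prime. Handling the wild ramification at $2$ uniformly, and controlling the $|\Delta_K|^{\eps}$-factor (expected to require on-average bounds for the $2$-torsion of class groups of $S_4$-quartics, \`a la Ellenberg--Venkatesh), is where the bulk of the technical work lies.
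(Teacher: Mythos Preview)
Your $S_4$ argument is correct and matches the paper's. For the remaining three groups the framework you describe (tower $L/K/\Q$, sum over $S_4$-quartics $K$ with $|\Delta_K|\le X^{1/2}$, bound quadratic $L/K$ via Kummer theory and equivariance constraints) is also the paper's starting point, but there is a genuine gap at precisely the spot you flag, and it is fatal for the exponents $3/5$.

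The bound $Q_G(K,Z)\ll Z^{s_G}|\Delta_K|^{\eps}$ is not available with current technology. The count of quadratic extensions of $K$ carries an unavoidable factor of $|S_2(K)|\asymp|\Cl(K)[2]|$, and neither pointwise nor on-average bounds of the shape $|\Cl(K)[2]|\ll|\Delta_K|^{\eps}$ are known for $S_4$-quartics; the Ellenberg--Venkatesh result you allude to does not give this. The paper instead uses the pointwise bound $|\Cl(K)[2]|\ll|\Delta_K|^{\kappa+\eps}$ with $\kappa\approx 0.2784$ due to Bhargava--Shankar--Taniguchi--Thorne--Tsimerman--Zhao. Plugging this into your scheme, the best your direct summation can yield is $X^{1/2+\kappa/2+\eps}\approx X^{0.64}$. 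That \emph{does} suffice for $C_2^3\rtimes S_4$ (and is exactly how the paper handles that group, after observing that the octic discriminant is a perfect square so the free part of the conductor count is $\ll Z^{1/2}$), but it does \emph{not} reach $3/5$ for $\GL_2(\F_3)$ or $Q_8\rtimes S_4$.

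To push below $1/2+\kappa/2$ for those two groups the paper brings in an ingredient entirely absent from your sketch. A ramification analysis in $G$ shows not only that the ``free'' part $N_1$ of $\Nm(\Delta_{L/K})$ (at primes away from $6\Delta_K$) is a fourth power (resp.\ squarefull), but also that the ``forced'' part $N_0$ at primes dividing $\Delta_K$ is \emph{large} whenever $\Delta_K$ is nearly squarefree, forcing $N_1\ll X/|\Delta_K|^3$ (resp.\ $X/|\Delta_K|^{7/3}$) rather than $X/|\Delta_K|^2$. One then conditions on the size of the squarefull part of $\Delta_K$ and invokes a tail estimate $\N_4^Z(S_4,X)\ll X^{11/12+\eps}+X/Z^{1-\eps}$ (proved via the geometric sieve on the prehomogeneous space of pairs of ternary quadratic forms) to control the complementary regime where $\Delta_K$ has a large square divisor. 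Balancing the two regimes yields $X^{1/2+\kappa/2-1/24+\eps}<X^{3/5}$. Your submodule/equivariance heuristic points toward the first ramification fact but misses the coupling with the arithmetic of $\Delta_K$ and the tail estimate, and without these the claimed exponents $3/5$ cannot be obtained.
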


See Table~\ref{tab:my_label} and the discussion in Section~\ref{S:known}
to compare the results of Theorem~\ref{T:2} with the best known bounds.

\section{Known results for groups in $\mathcal{G}$}\label{S:known}

Let $\mathcal{F}$ denote the set of
octic extensions $L$ of the form $L/K/\Q$ where
$K/\Q$ is $S_4$-quartic and $L/K$ is quadratic.  This will be the collection
of fields of interest.  To begin, we require the following result:

\begin{lemma}\label{L:1}
Let $L$ be an octic field.
Then $L\in\mathcal{F}$  if and only if $\Gal(\widetilde{L}/\Q)\in\mathcal{G}$,
where
$$\mathcal{G}=\{S_4, \GL_2(\mathbb{F}_3), \, S_4 \times C_2, \, C_2^3 \rtimes S_4, \, Q_8 \rtimes S_4, \, C_2 \wr S_4\}\,.$$
\end{lemma}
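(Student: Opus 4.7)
The plan is to translate the condition $L \in \mathcal{F}$ into a purely group-theoretic statement about $G \colonequals \Gal(\widetilde{L}/\Q) \subseteq S_8$, classify the resulting subgroups, and match them to the six named groups in $\mathcal{G}$.

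First I would reformulate both directions of the lemma group-theoretically. The condition $L \in \mathcal{F}$ means that the $8$ embeddings of $L$ into $\overline{\Q}$ project in pairs to the $4$ embeddings of $K$, giving a $G$-invariant block system of $4$ blocks of size $2$ whose induced block action is $\Gal(\widetilde{K}/\Q) \cong S_4$; equivalently, $G$ is a transitive subgroup of $C_2 \wr S_4 \subseteq S_8$ whose projection onto the top $S_4$ factor is surjective. Conversely, any such $G$ yields, via the Galois correspondence, a tower $\Q \subset K \subset L$ of the required type: the block stabilizer fixes $K$ with $[K:\Q]=4$, the point stabilizer fixes $L$ with $[L:K]=2$, and the block action identifies $G/(G\cap C_2^4)$ with $S_4$, so $K$ is $S_4$-quartic. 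Both directions of the lemma therefore reduce to the classification of transitive subgroups $G \subseteq C_2 \wr S_4 \subseteq S_8$ that surject onto $S_4$.

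Next I would set $A \colonequals G \cap C_2^4$, which is an $S_4$-invariant subgroup of $(\F_2)^4$ under the permutation action. A short weight-based enumeration (using the fact that $S_4$ acts transitively on each Hamming weight stratum) shows that the only $S_4$-invariant subgroups of $(\F_2)^4$ are $\{0\}$, the diagonal $D \colonequals \langle(1,1,1,1)\rangle$, the sum-zero submodule $V$, and $(\F_2)^4$ itself, giving $|A|\in\{1,2,8,16\}$. For each such $A$, the subgroups $G\subseteq C_2 \wr S_4$ with $G\cap C_2^4 = A$ and $G \twoheadrightarrow S_4$ are classified up to conjugacy in $C_2 \wr S_4$ by $H^1(S_4, (\F_2)^4/A)$, and one then imposes the transitivity condition on the $8$ points. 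The extremes $|A|=1$ and $|A|=16$ give exactly one transitive subgroup apiece, namely $G \cong S_4$ (the ``twisted'' diagonal inside $C_2 \wr S_4$, since the untwisted $S_4$ has two orbits on the eight points) and $G = C_2 \wr S_4$. For $|A|\in\{2,8\}$ a cohomological computation yields two conjugacy classes of subgroups, which are realized as $S_4 \times C_2$ and $\GL_2(\F_3)$ in the first case and as $C_2^3 \rtimes S_4$ and $Q_8 \rtimes S_4$ in the second. Together these account for precisely the six groups in $\mathcal{G}$.

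The main obstacle is the cohomological analysis in the intermediate cases $|A|\in\{2,8\}$: one must compute $H^1(S_4, (\F_2)^4/A)$, determine which cohomology classes correspond to transitive subgroups of $S_8$ (rather than merely to sections of the projection), and identify each resulting abstract group with the corresponding named group in $\mathcal{G}$. The identification of the non-split extensions as $\GL_2(\F_3)$ and $Q_8 \rtimes S_4$ is particularly delicate; it can be verified by comparing invariants such as the orders of lifts of transpositions or the structure of the Sylow $2$-subgroup.
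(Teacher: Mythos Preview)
Your approach is correct but differs substantially from the paper's.  The paper's proof is essentially computational: after citing the standard fact that $G$ embeds in $C_2\wr S_4$, it invokes a SageMath enumeration of the $32$ transitive subgroups of $C_2\wr S_4$ inside $S_8$ and filters those admitting $S_4$ as a quotient, then handles the converse by directly checking that every index-$8$ core-free subgroup of each $G\in\mathcal{G}$ sits inside an index-$4$ subgroup whose normal core gives an $S_4$ quotient.  Your route is instead structural and computer-free: after the same reduction to transitive $G\subseteq C_2\wr S_4$ with $G\twoheadrightarrow S_4$, you stratify by the $S_4$-submodule $A=G\cap C_2^4$ of $(\mathbb{F}_2)^4$ (correctly identifying the four possibilities $\{0\},D,V,(\mathbb{F}_2)^4$) and then classify complements via $H^1(S_4,(\mathbb{F}_2)^4/A)$.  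The paper's argument is shorter and leans on machine verification; yours explains conceptually why the six groups fall into the pairs $(S_4)$, $(S_4\times C_2,\GL_2(\mathbb{F}_3))$, $(C_2^3\rtimes S_4,Q_8\rtimes S_4)$, $(C_2\wr S_4)$ indexed by $|A|$, at the cost of the cohomology computations and the group identifications you flag as the main obstacle.  Those computations do go through (e.g., $H^1(S_4,(\mathbb{F}_2)^4)\cong\mathbb{F}_2$ by Shapiro, and $H^1(S_4,(\mathbb{F}_2)^4/V)\cong H^1(S_4,\mathbb{F}_2)\cong\mathbb{F}_2$ directly), so the outline is sound.
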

\begin{proof}
Suppose $L\in\mathcal{F}$.
We have that $G\colonequals \Gal(\widetilde{L}/\Q)$ embeds into $C_2 \wr S_4$.
See Theorem~1.8 of~\cite{MR4858206} or Proposition~3.2 of~\cite{MR4594565}.
Using SageMath, one finds that there are $32$ subgroups of $C_2 \wr S_4$ that are transitive as subgroups
of $S_8$.  Of these $32$ subgroups, those with $S_4$ as a quotient are precisely the groups in $\mathcal{G}$.

Conversely, suppose that $G \in \mathcal{G}$. Then $H_L \colonequals \text{Gal}(\widetilde{L}/L)$ is an index 8 subgroup of $G$ containing no nontrivial normal subgroups of $G$. One can check that each such subgroup of $G$ is contained in an index 4 subgroup $H_K$ such that the quotient $G/M$ is isomorphic to $S_4$, where $M$ is the normal core of $H_K$.
Thus, the subfield of $\widetilde{L}$ fixed by $H_K$ is a quartic $S_4$-field contained in $L$, so $L\in\mathcal{F}$.
\end{proof}

We now briefly summarize what is known for the groups in $\mathcal{G}$.
Table~\ref{tab:my_label} gives each group with its permutation ID, standard group name,
the conjectured value $\alpha \colonequals \alpha(G)$, the current best known result
of the form $\N_8(G,X)\ll X^{\beta+\eps}$ by
applying general methods,
and the new bounds we establish.
We have indicated with a checkmark cases where Malle's conjecture is known.
As previously mentioned, Kl\"uners proves Malle's conjecture for $C_2 \wr S_4$.
Malle's conjecture for $S_4\times C_2$ follows from a result of Masri, Thorne, Tsai, and Wang, who
treat the case of $S_n\times A$ with $A$ an abelian group (see~\cite{MR4219215, masri2020mallesconjecturegtimes}).
Of these six groups, $Q_8 \rtimes S_4$ and $C_2 \wr S_4$ are concentrated\footnote{From~\cite{alberts2025inductivemethodscountingnumber}:
``We say that a transitive permutation group $G$ is concentrated in a proper
normal subgroup $N$ if $N$ contains all minimum index elements of $G$, and we say that $G$ is concentrated if this holds for some proper normal subgroup $N$.''} and the others are not.
However, Malle's conjecture for $Q_8\rtimes S_4$ does not follow immediately 
from the result of Alberts, Lemke Oliver, Wang, and Wood
(see~\cite{alberts2025inductivemethodscountingnumber}).

We are left with four groups, $\text{GL}_2(\mathbb{F}_3)$, $C_2^3 \rtimes S_4$, $S_4$ (in its $S_8$ embedding), and $Q_8\rtimes S_4$,
for which Malle is not known.
For comparison, we review the best bounds one can get for these groups from various methods.
Schmidt's result (see~\cite{MR1330934}) produces an upper bound, independent of $G$, of $\N_8(G,X)\ll X^{5/2}$.
Dummit (see~\cite{MR3882158}) improves this bound in some cases.
Later, a result of Alberts improves this to $X^{3/2+\eps}$ for $\text{GL}_2(\mathbb{F}_3)$
and $X^{11/8+\eps}$ for $S_4$ (see~\cite{MR4047213}).
A result of Bhargava, Shankar, and Wang
improves this to $X^{311/128 + \eps}$
for $C_2^3 \rtimes S_4$ and $Q_8 \rtimes S_4$ (see~\cite{MR4493242}).
These appear to be the best known bounds for these groups coming from general methods.
Notice that none of these produce an exponent less than one.
However, using Kl\"uners' argument (see~\cite{MR2904935}), one can achieve the upper bound
of $X^{3/4+\eps}$ for all four groups.
In all four cases, the results in Theorem~\ref{T:2} represent improvements to all known bounds.

\begin{table}[h]
    \centering
    \begin{tabular}{c|c|c|c|cc}
        Permutation ID & Group Name & Conjectured & Best known & New bound  \\
        \hline 
        8T14 & $S_4$ & $1/4$ & $11/8$ & 1/2\\
        8T23 & $\text{GL}_2(\mathbb{F}_3)$ & $1/3$ & $3/2$ & $3/5$\\
        8T24 & $S_4 \times C_2$ & $1/2$ & \checkmark & - \\
        8T39 & $C_2^3 \rtimes S_4$ & $1/2$ & $311/128$ & $9/14$\\
        8T40 & $Q_8 \rtimes S_4$ & $1/2$ & $311/128$ & $3/5$ \\
        8T44 & $C_2 \wr S_4$ & 1 & \checkmark & - \\
    \end{tabular}
    \vspace{0.5cm}
    \caption{Bounds on $\N_8(G,X)$ for $G\in\mathcal{G}$}
    \label{tab:my_label}
\end{table}

The strategy of the proof of Theorem~\ref{T:1} is as follows.  Let $\N_8(\mathcal{G},X)$ be
the number of $L\in\mathcal{F}$ with $|\Delta_L|\leq X$.  We first prove a counting theorem
with a power saving error term for $\N_8(\mathcal{G},X)$.  This is Theorem~\ref{T:3} of Section~\ref{S:maincount}.
Then, in order to obtain a power saving error term
for $\N_8(C_2 \wr S_4,X)$, we must give a strong enough upper bound on $\N_8(G,X)$
for each of the four remaining groups.
The necessary bounds, stated in Theorem~\ref{T:2}, are proved in
Sections~\ref{S:S4}, \ref{S:C23S4}, \ref{S:GL2F3}, and~\ref{S:Q8S4}.

\section{Counting octic extensions $L/K/\Q$ that are quadratic over $S_4$-quartic}\label{S:maincount}

Let $\mathcal{F}$ denote the set of
octic extensions $L$ of the form $L/K/\Q$ where
$K/\Q$ is $S_4$-quartic and $L/K$ is quadratic.
By Lemma~\ref{L:1}, the set $\mathcal{F}$ constitutes the set of octic extensions $L$
such that $\Gal(\widetilde{L}/K)\in\mathcal{G}$.
Our count in this section will include each $S_4$-quartic field only once up to isomorphism,
but in some cases we still obtain isomorphic octic fields of the form $L/K/\Q$.
This will not be an issue later as each $C_2\wr S_4$-octic field shows up exactly one time in this way,
and for the other groups $G$ we will only seek upper bounds so the multiplicity 
can be absorbed into the implicit constant.
For clarity, note that in Theorem~\ref{T:1}, we count fields up to isomorphism,
and that in the definition of the constant $C$, the sum is taken over all $S_4$-quartic
fields up to isomorphism.

The two main tools needed here will be
Bhargava's count of $S_4$-quartic fields
and a counting theorem for relative quadratic
extensions due to McGown and Tucker, where the dependence of the error term 
on the base field is made explicit.
Amusingly, no additional bound on the error term for counting $S_4$-quartic fields
is needed.  That being said, we will require a bound on
$\left|\Cl(K)[2]\right|$, the size of the $2$-torsion in the class group of an $S_4$-quartic field.
One could use Brauer--Siegel here, but we take advantage of the state-of-the-art
bound due to Bhargava--Shankar--Taniguchi--Thorne--Tsimerman--Zhao. 

\begin{theorem}\label{T:3}
The number of 
$L\in\mathcal{F}$ with $|\Delta_L|\leq X$ equals
$$
  \N_{8}(\mathcal{G}, X)=CX+O(X^{3/5 + 2\kappa/5 + \eps}),
$$
where $C$ is given in Theorem ~\ref{T:1}
and $\kappa\approx 0.2784$.
The value of $\kappa$ is given by an exact expression
in~\cite[page 8]{MR4155220}.
\end{theorem}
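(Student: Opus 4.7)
The plan is to stratify $\mathcal{F}$ by its $S_4$-quartic subfield. By the tower formula for discriminants, any $L \in \mathcal{F}$ with tower $L/K/\Q$ satisfies $|\Delta_L| = |\Delta_K|^2 \, N_{K/\Q}(\mathfrak{d}_{L/K})$, so
$$\N_{8}(\mathcal{G}, X) = \sum_{\substack{K/\Q \text{ }S_4\text{-quartic}\\|\Delta_K|^2 \leq X}} \#\bigl\{L/K \text{ quadratic} : N_{K/\Q}(\mathfrak{d}_{L/K}) \leq X/|\Delta_K|^2\bigr\}.$$
The two inputs flagged before the statement are then combined: McGown--Tucker's counting theorem for relative quadratic extensions, with dependence on the base field made explicit, for the inner count; and Bhargava's $O(Y)$ asymptotic for the number of $S_4$-quartic fields with $|\Delta_K|\leq Y$, for the sum over $K$.

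First I would apply McGown--Tucker to each inner count, obtaining a main term of the form $(\zeta_K^*(1)/(2^{r_2(K)} \zeta_K(2))) \cdot X/|\Delta_K|^2$ together with an error term whose $K$-dependence is expressed through $|\Cl(K)[2]|$ and a fixed power of $|\Delta_K|$. Summed over all $S_4$-quartic $K$, the main terms assemble into $CX$; the tail beyond a truncation parameter $D$ is negligible by the easy bound $\zeta_K^*(1) \ll |\Delta_K|^{\eps}$ and partial summation against Bhargava's asymptotic. For $K$ with $|\Delta_K| > D$ I would instead use a crude class-field-theoretic bound on the number of quadratic extensions of $K$ with bounded relative discriminant, which is again linear in $|\Cl(K)[2]|$. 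In both the inner McGown--Tucker error and the large-$|\Delta_K|$ tail the class number factor is then replaced via the subconvex bound $|\Cl(K)[2]| \ll |\Delta_K|^{\kappa+\eps}$ of Bhargava--Shankar--Taniguchi--Thorne--Tsimerman--Zhao, and the resulting sum over $|\Delta_K|$ is carried out by partial summation against Bhargava's count.

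The main obstacle is the optimization of $D$: one must choose $D$ as a specific power of $X$ so that the accumulated McGown--Tucker error and the tail from large $|\Delta_K|$ balance, at which point the combined error telescopes to $O(X^{3/5 + 2\kappa/5 + \eps})$. Tracing through, the exponent $3/5 + 2\kappa/5$ arises precisely from the interplay between the $|\Delta_K|$-aspect of the McGown--Tucker error and the $|\Delta_K|^{\kappa+\eps}$ bound on $|\Cl(K)[2]|$, balanced by partial summation. The power saving is driven entirely by the $K$-aspect of McGown--Tucker and the subconvex bound on $|\Cl(K)[2]|$; consistent with the remark preceding the theorem, no quantitative error term for Bhargava's count is required.
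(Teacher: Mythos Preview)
Your proposal is correct and follows essentially the same approach as the paper: split the sum over $K$ at a threshold $Z$ (your $D$), apply McGown--Tucker for $|\Delta_K|\leq Z$ and a crude Selmer/class-field bound linear in $|\Cl(K)[2]|$ for $|\Delta_K|>Z$, insert the BSTTTZ bound $|\Cl(K)[2]|\ll|\Delta_K|^{\kappa+\eps}$, and evaluate both pieces by partial summation against $\N_4(S_4,X)\ll X$; the choice $Z=X^{2/5}$ balances the errors to $O(X^{3/5+2\kappa/5+\eps})$.
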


\begin{proof}
We begin by writing
\begin{equation} \label{eq: full sum}
    \N_{8}(\mathcal{G}, X)
    =
    \sum_{\substack{[K:\Q] = 4\\G(\widetilde{K}/\Q) \cong S_4\\|\Delta_K| \leq \sqrt{X}\\}}
    \sum_{\substack{[L:K]=2\\\Nm(\Delta_{L/K}) \leq X/\Delta_K^2\\}}1
    \,.
\end{equation}
By Theorem~2 of~\cite{MR4808549}, the inner sum equals
\begin{align}
\nonumber
\sum_{\substack{[L:K]=2\\\Nm(\Delta_{L/K}) \leq X/\Delta_K^2}}1
&=
\frac{X}{\Delta_K^2}\frac{1}{2^{r_2(K)}}\frac{\zeta_K^*(1)}{\zeta_K(2)} 
+
O \left( 
|\text{Cl}(K)[2]| \cdot |\Delta_K|^{1/5} \left(\frac{X}{\Delta_K^2}\right)^{3/5}
\log (X/\Delta_K^2)^{3}
\right)
\\
\label{E:AT1}
&=
\frac{X}{\Delta_K^2}\frac{1}{2^{r_2(K)}}\frac{\zeta_K^*(1)}{\zeta_K(2)} 
+
O \left( 
|\Delta_K|^{\kappa-1+\eps} X^{3/5+\eps}
\right)
\,.
\end{align}
Here we have used
Theorem~1.1 of~\cite{MR4155220}, which states that
\begin{equation}\label{E:torsion}
|\text{Cl}(K)[2]| \ll |\Delta_K|^{\kappa + \eps}.
\end{equation}
Moreover, using the parametrization given in~\cite{MR1918290}, or alternatively
Equation 3 in Section 3.2 of~\cite{MR4808549},
we also have the weaker estimate
\begin{align}\label{E:AT2}
\sum_{\substack{[L:K]=2\\\Nm(\Delta_{L/K}) \leq X/\Delta_K^2}}1
&\leq
|S_2(K)|\sum_{\Nm(\mathfrak{a}) \leq X/\Delta_K^2}1
&\ll
\left|\Cl(K)[2]\right|\cdot|\Delta_K|^\eps\left(\frac{X}{\Delta_K^2}\right)
&\ll
|\Delta_K|^{\kappa-2+\eps}X
\,.
\end{align}
In the above, $S_2(K)$ is the $2$-Selmer group of $K$, which can
be bounded using Proposition 3.2 of~\cite{MR1918290}.  Moreover,
we have used $\sum_{\Nm(\mathfrak{a}) \leq Y}1\leq|\Delta_K|^\eps Y$;
this is not hard to derive and appears as Equation~(21) in \cite{MR4808549}.

Let $0<Z<X^{1/2}$ be a parameter.  Splitting the sum in (\ref{eq: full sum}), we have
\begin{align}\label{E:split}
    \N_{8}(\mathcal{G}, X)
    &=
    \sum_{
        \substack{
            [K:\Q] = 4\\
            G(\widetilde{K}/\Q) \cong S_4\\
            |\Delta_K| \leq Z\\
        }
    }
    \sum_{
    \substack{
    [L:K]=2\\
    \Nm(\Delta_{L/K}) \leq X/\Delta_K^2\\
    }
    }{1}
    +
        \sum_{
        \substack{
            [K:\Q] = 4\\
            G(\widetilde{K}/\Q) \cong S_4\\
            Z<|\Delta_K| \leq \sqrt{X}\\
        }
    }
    \sum_{
    \substack{
    [L:K]=2\\
    \Nm(\Delta_{L/K}) \leq X/\Delta_K^2\\
    }
    }{1}
    \,.
\end{align}
Using (\ref{E:AT1}), the first piece of (\ref{E:split}) equals
\begin{align}
\nonumber
    X
   \sum_{
        \substack{
            [K:\Q] = 4\\
            G(\widetilde{K}/\Q) \cong S_4\\
            |\Delta_K| \leq Z\\
        }
    }
    \frac{1}{2^{r_2(K)}\Delta_K^2}\cdot\frac{\zeta_K^*(1)}{\zeta_K(2)} 
    +
        O
    \left(
    X^{3/5+\eps}
    \sum_{
        \substack{
            [K:\Q] = 4\\
            G(\widetilde{K}/\Q) \cong S_4\\
            |\Delta_K| \leq Z\\
        }
    }
    |\Delta_K|^{\kappa-1+\eps}
    \right)
    \,.
\end{align}

From~\cite{MR2183288} one knows $\N_4(S_4,X)\ll X$, and therefore partial summation yields
\begin{align}
\sum_{\substack{[K:\Q] = 4\\G(\widetilde{K}/\Q) \cong S_4\\|\Delta_K| > Z\\ }}
\frac{1}{2^{r_2(K)}\Delta_K^2}\cdot\frac{\zeta_K^*(1)}{\zeta_K(2)} 
\ll
\frac{1}{Z^{1-\eps}}
\,,\qquad
\sum_{\substack{[K:\Q] = 4\\G(\widetilde{K}/\Q) \cong S_4\\|\Delta_K| \leq Z\\}}|\Delta_K|^{\kappa-1+\eps}
\ll
Z^{\kappa+\eps}\,.
\end{align}
Note we have also used the estimate $\zeta^*_K(1)\ll |\Delta_K|^\eps$.
Therefore we have
$$
    \sum_{
        \substack{
            [K:\Q] = 4\\
            G(\widetilde{K}/\Q) \cong S_4\\
            |\Delta_K| \leq Z\\
        }
    }
    \sum_{
    \substack{
    [L:K]=2\\
    \Nm(\Delta_{L/K}) \leq X/\Delta_K^2\\
    }
    }{1}
    =CX+O\left(XZ^{-1+\eps}\right)+O\left(X^{3/5+\eps}Z^{\kappa+\eps}\right)
    \,.
$$
For the second piece of (\ref{E:split}), we use (\ref{E:AT2}) to obtain
$$
        \sum_{
        \substack{
            [K:\Q] = 4\\
            G(\widetilde{K}/\Q) \cong S_4\\
            Z<|\Delta_K| \leq \sqrt{X}\\
        }
    }
    \sum_{
    \substack{
    [L:K]=2\\
    \Nm(\Delta_{L/K}) \leq X/\Delta_K^2\\
    }
    }{1}
  \ll
  X
          \sum_{
        \substack{
            [K:\Q] = 4\\
            G(\widetilde{K}/\Q) \cong S_4\\
            Z<|\Delta_K| \leq \sqrt{X}\\
        }
    }
    |\Delta_K|^{\kappa-2+\eps}
\ll
XZ^{\kappa-1+\eps}
\,.
$$
Summarizing, the error term is
$$
O\left(XZ^{-1+\eps}\right)+O\left(X^{3/5+\eps}Z^{\kappa+\eps}\right)
+O\left(
XZ^{\kappa-1+\eps}
\right)
\,.
$$
Choosing $Z=X^{2/5}$, we have
\begin{equation}\label{E:big.count}
\N_8(\mathcal{G}, X) = C X + O(X^{3/5 + 2\kappa/5 + \eps}),
\end{equation}
where $C$ is as in Theorem~\ref{T:1}.
\end{proof}

\section{An upper bound for octic fields with Galois group $S_4$}\label{S:S4}

In this section, we prove an upper bound on $\N_8(S_4,X)$, which is the first
of four groups we have to treat in Theorem~\ref{T:2}.
The next proof is short and far from an optimal bound, but we require it nonetheless 
for establishing Theorem~\ref{T:1}.

\begin{proof}[Proof of Theorem~\ref{T:2} for $G=S_4$]
Given an $S_4$-octic field $L$, we know that $L$ contains a unique $S_4$-quartic field $K$.
Moreover, $L$ is the unique $S_4$-octic field satisfying $K\subseteq L\subseteq\widetilde{K}$.

Therefore, given an $S_4$-quartic field $K$, we can define
$$
  f_K(X) \colonequals \begin{cases}
  1, & \Nm(\Delta_{L/K})\le X/\Delta_K^2,\\
  0 & \text{otherwise},
  \end{cases}
$$
where $L$ is the unique $S_4$-octic containing $K$.
We then have
\begin{align*}
    \N_8(S_4,X) = \sum_{\substack{[K:\Q]=4 \\ G(\widetilde{K}/\Q) \cong S_4 \\ |\Delta_K|\le \sqrt{X}}}f_K(X).
\end{align*}

Trivially bounding this sum and using $\N_4(S_4,X)\ll X$ (see~\cite{MR2183288}) yields 
\begin{align*}
    \N_8(S_4,X) \ll X^{1/2},
\end{align*}
as desired.
\end{proof}

\section{An upper bound for octic fields with Galois group $C_2^3\rtimes S_4$}\label{S:C23S4}

In this section we treat the second of the four groups $G$ appearing in Theorem~\ref{T:2};
namely, $G=C_2^3\rtimes S_4$.
The following observation will be helpful in the argument that follows.

\begin{lemma}\label{L:semidirect}
The discriminant of an octic field $L$ with Galois closure $C_2^3 \rtimes S_4$ is always a square. 
\end{lemma}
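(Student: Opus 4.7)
My plan is to reduce the lemma to a group-theoretic parity computation via the classical fact that for a number field $L/\Q$ of degree $n$, the discriminant $\Delta_L$ is a perfect square in $\Z$ if and only if the Galois group $G \colonequals \Gal(\widetilde L/\Q)$, viewed as a transitive subgroup of $S_n$, is contained in $A_n$. (Indeed, writing $L = \Q(\alpha)$ with minimal polynomial $f$, the element $\delta \colonequals \prod_{i<j}(\alpha_i - \alpha_j) \in \widetilde L$ satisfies $\delta^2 = \operatorname{disc}(f)$ and is fixed by $G$ iff $G$ acts on the roots by even permutations; since $\operatorname{disc}(f)$ differs from $\Delta_L$ by the square of the conductor index $[\mathcal O_L : \Z[\alpha]]$, the square condition on $\Delta_L$ is equivalent to $G \subseteq A_n$.) Applied with $n=8$ and $G = C_2^3 \rtimes S_4$, the lemma reduces to verifying that the transitive subgroup of $S_8$ labelled $8T39$ in Table~\ref{tab:my_label} is contained in $A_8$.

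To carry out the verification, I would realise $C_2 \wr S_4 = C_2^4 \rtimes S_4$ as a transitive subgroup of $S_8$ acting on $\{1,2,3,4\} \times \{0,1\}$ in the standard way: the $i$-th generator of the normal $C_2^4$ swaps $(i,0) \leftrightarrow (i,1)$ and fixes everything else, while $\sigma \in S_4$ sends $(i,\epsilon) \mapsto (\sigma(i), \epsilon)$. The group $C_2^3 \rtimes S_4$ is then the sum-zero subgroup $\{((a_1,\dots,a_4), \sigma) : a_1 + a_2 + a_3 + a_4 \equiv 0 \pmod 2\}$. Each standard generator of $C_2^4$ is a single transposition, so a product with support $S$ has sign $(-1)^{|S|}$; the sum-zero condition forces $|S|$ to be even, and hence the $C_2^3$ part acts by even permutations. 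A $k$-cycle in $S_4$ lifts to two disjoint $k$-cycles on the $8$ points and therefore has sign $((-1)^{k-1})^2 = +1$, so $S_4 \hookrightarrow A_8$. Combining these two facts, every element of $C_2^3 \rtimes S_4$ is an even permutation, so $C_2^3 \rtimes S_4 \subseteq A_8$, which yields the lemma.

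The essentially only subtlety is confirming that the abstract symbol ``$C_2^3 \rtimes S_4$'' in the statement refers to this specific index-$2$ subgroup of $C_2 \wr S_4$ rather than one of the other two (there are three such subgroups in total, one per nontrivial character of the abelianisation $C_2 \times C_2$ of $C_2 \wr S_4$). A short commutator computation shows that the sum-zero subgroup is the unique index-$2$ subgroup whose intersection with $C_2^4$ has order $8$ and which admits a complement isomorphic to $S_4$; this matches both the abstract semidirect-product name and the identifier $8T39$. Alternatively, the containment $8T39 \subseteq A_8$ can be checked in a few lines of SageMath directly from the permutation-group tables.
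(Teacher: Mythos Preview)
Your proof is correct and follows essentially the same approach as the paper: both reduce the statement to the containment $C_2^3 \rtimes S_4 \subseteq A_8$ via the standard relation between $\disc(f)$, $\Delta_L$, and the parity of the Galois action. The only difference is that the paper simply asserts ``any embedding of $C_2^3 \rtimes S_4$ in $S_8$ is contained in $A_8$'' without details, whereas you give an explicit verification using the wreath-product description (and address the identification of the correct index-$2$ subgroup); your added detail is a welcome elaboration rather than a different argument.
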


\begin{proof}
Let $L$ be such an extension.
One verifies that any embedding of $C_2^3 \rtimes S_4$ in $S_8$ is contained in $A_8$.
If $f(x) \in \mathbb{Q}[x]$ is the minimal polynomial of a primitive element $\alpha$ of $L/\Q$,
then it is well-known that the previous statement implies that $\disc(f)$ is a square.
But $\disc(f)$ is also the discriminant of the lattice $\mathbb{Z}[\alpha]$, which differs from $\Delta_L$ by a square. Hence $\Delta_L$ is also a square.
\end{proof}

\begin{proof}[Proof of Theorem~\ref{T:2} for $G=C_2^3\rtimes S_4$]
Using Lemma~\ref{L:semidirect} we have
$$
  \N_8(C_2^3 \rtimes S_4,X)\ll
\sum_{\substack{[K:\Q]=4\\ G(\widetilde{K}/\Q)\cong S_4\\|\Delta_K|\leq\sqrt{X}}}
\sum_{\substack{[L:K]=2\\
\Nm(\Delta_{L/K})\leq X/\Delta_K^2\\
\Nm(\Delta_{L/K})=\square}}
1.
$$
Using the parametrization given in~\cite{MR1918290}, or alternatively
Equation 3 in Section 3.2 of~\cite{MR4808549},
we can bound the inner sum as
\begin{align*}
\sum_{\substack{[L:K]=2\\
\Nm(\Delta_{L/K})\leq X/\Delta_K^2\\
\Nm(\Delta_{L/K})=\square}}1
&\ll |S_2(K)|\sum_{\substack{\Nm(\mathfrak{a})\leq X/\Delta_K^2\\
\Nm(\mathfrak{a})=\square}}1
\,,
\end{align*}
where $S_2(K)$ denotes the $2$-Selmer group of $K$ and the notation $\Nm(\mathfrak{b})=\square$
denotes that the absolute norm of the ideal $\mathfrak{b}$ in $K$ is a square in $\Z$.
Using Proposition 3.2 of~\cite{MR1918290} and
(\ref{E:torsion}),
we find that
$$
   |S_2(K)|\ll |\Cl(K)[2]| \ll |\Delta_K|^{\kappa+\eps}. 
$$
 Moreover, we have
$$
\sum_{\substack{\Nm(\mathfrak{a})\leq Y\\
\Nm(\mathfrak{a})=\square}}1
=
\sum_{\substack{n\leq Y\\n=\square}}a_n(K)
\leq 
\sum_{\substack{n\leq \sqrt{Y}}}\tau_4(n^2)
\ll
Y^{1/2+\eps}.
$$
In the above, the $a_n(K)$ are the coefficients in the Dirichlet
series of the Dedekind zeta function $\zeta_K(s)$ and
$\tau_4(n)$ is the number of ways to write $n$ as a product of $4$ positive integers.
We have used the estimate $a_n\leq \tau_4(n)\ll n^\eps$; see, for example, Exercise 1 on page 231 of \cite{MR195803}, and the result contained in~\cite{MR4188678}.
Assembling all the pieces together and using partial summation on $\N_4(S_4,X)\ll X$,
we conclude that
\begin{equation}
\N_8(C_2^3 \rtimes S_4,X)\ll
X^{1/2+\eps}
\sum_{\substack{[K:\Q]=4\\ G(\widetilde{K}/\Q)\cong S_4\\|\Delta_K|\leq\sqrt{X}}}
|\Delta_K|^{\kappa-1-\eps}
\ll
X^{1/2 + \kappa/2 + \eps}
\,.
\end{equation}
\end{proof}

\begin{remark}
We believe our upper bound in this case could be improved,
but
have chosen not to pursue it here.  However, we note that
if one assumes that the average of $\left|\Cl(K)[2]\right|$ over $S_4$-quartic fields $K$ is $\ll|\Delta_K|^\eps$,
a slight modification of the above argument would yield an upper bound of $\N_8(G,X)\ll X^{1/2+\eps}$,
the predicted upper bound in the weak form of Malle's conjecture
for $G=C_2^3\rtimes S_4 $.
\end{remark}

\section{An upper bound for octic fields with Galois group $\GL_2(\F_3)$}\label{S:GL2F3}

Here we treat the third group in the statement of Theorem~\ref{T:2}.
The conjectured Malle constant $\alpha$ for $\GL_2(\F_3)$ is $1/3$,
which is smaller than that of all other groups in $\mathcal{G}$, save $S_4$.
As such, we are interested in deriving a bound for this group that would beat the bound we found
for $C_2^3\rtimes S_4$ in Section~\ref{S:C23S4}.

Throughout this section, unless otherwise specified,
$G$ will always denote $\GL_2(\F_3)$
and $\widetilde{L}/L/K/\Q$ will always denote a tower of fields
where $K/\Q$ is $S_4$-quartic, $L/K$ is quadratic, and
$\Gal(\widetilde{L}/\Q)\cong G$.
We now prove a few simple facts regarding the splitting of ramified primes in an octic $G$-extension. In the subsequent proofs,
let $\widetilde{\mathfrak{P}}\mid\mathfrak{P}\mid\mathfrak{p}\mid p$ be primes
in $\widetilde{L}/L/K/\Q$ respectively, each one lying over (i.e., dividing) the next.
\begin{lemma} \label{lemma: splitting in G}
  Let $L/K/\Q$ be as above. Let $p$ be a prime that is tamely ramified in $\widetilde{L}$.
    \begin{enumerate}
        \item We have $e(\widetilde{L}/ p) \in \{2,3,4,6,8\}$.
        \item Suppose $e(\widetilde{L}/p) = 2$.  If the ramification indices $e(\mathfrak{p}/p)$
        for $\mathfrak{p}$ in $K$ over $p$ are not all the same,
        then we must have $f(\widetilde{L}/p) \leq 2$.
        \item
        Suppose $e(\mathfrak{P}/p)\leq 2$ for all primes $\mathfrak{P}$ in $L$ above $p$.
        If the indices $e(\mathfrak{p}/p)$ for $\mathfrak{p}$ in $K$ over $p$ are not all equal,
        then we must have $e(\widetilde{L}/p) = 2$ and $f(\widetilde{L}/p) \leq 2$.
    \end{enumerate}
\end{lemma}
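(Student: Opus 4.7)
The plan is to translate ramification data at $p$ into algebraic information about the inertia and decomposition subgroups $I \subseteq D \subseteq G = \GL_2(\F_3)$ at a fixed prime of $\widetilde{L}$ above $p$. Because $p$ is tame, $I$ is cyclic of order $e(\widetilde{L}/p)$, and for any subgroup $H \leq G$ the primes of $\widetilde{L}^H$ lying above $p$ have ramification indices $|I|/|I \cap \sigma H \sigma^{-1}|$ as $\sigma$ ranges over double-coset representatives of $D \backslash G / H$. Two structural facts drive everything: $H_K$ (of order $12$) is the preimage in $G$ of an index-$4$ subgroup of $S_4 = G/\{\pm I\}$, so $-I \in H_K$; while $H_L$ (of order $6$) has trivial normal core in $G$, so $-I \notin H_L$. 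Part (1) is then immediate, since $|I|$ must be the order of a nonidentity element of $G$, and a direct check shows the element orders of $\GL_2(\F_3)$ are $\{1,2,3,4,6,8\}$.

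For part (2), $G$ has exactly two conjugacy classes of involutions: the central class $\{-I\}$ and the non-central class of $\tau = \mathrm{diag}(1,-1)$. If $I = \{\pm I\}$, then $-I \in \sigma H_K \sigma^{-1}$ for every $\sigma$ by centrality, so $I \subseteq \sigma H_K \sigma^{-1}$ and every $e(\mathfrak{p}/p) = 1$, contradicting the hypothesis. Hence $I = \langle \tau \rangle$. In the tame case, Frobenius acts on $I$ by raising to the $p$-th power, which is trivial when $|I| = 2$, so $D \subseteq C_G(\tau)$. The centralizer of $\tau$ is the split diagonal torus of order $4$, giving $f(\widetilde{L}/p) = |D|/|I| \leq 2$.

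For part (3), the strategy is to first pin down $|I| = 2$ and then invoke part (2). A short computation shows that in $\GL_2(\F_3)$, every element of order $4$ squares to $-I$ and every element of order $6$ cubes to $-I$, so $-I$ is the unique involution in any cyclic subgroup of $G$ of even order at least $4$. I would then rule out each candidate $|I| \in \{3,4,6,8\}$ in turn. The case $|I| = 8$ collapses immediately because $|I \cap \sigma H_L \sigma^{-1}|$ divides $\gcd(8,6) = 2$, producing some $e(\mathfrak{P}/p) \geq 4$. For $|I| = 4$ the bound $|I|/|I \cap \sigma H_L \sigma^{-1}| \leq 2$ forces every intersection to contain the unique order-two subgroup of $I$, namely $\{1,-I\}$, contradicting $-I \notin H_L$. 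For $|I| = 6$ the same bound forces the unique order-three subgroup of $I$ into every conjugate of $H_L$, hence into the normal core of $H_L$; and for $|I| = 3$ it forces $I$ itself into every conjugate. Both contradict triviality of the core. Thus $|I| = 2$, and part (2) finishes the argument.

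The main obstacle will be the case analysis in part (3), since it simultaneously uses the uniform ramification bound—a statement about every conjugate of $H_L$ in $G$—and the rigid internal structure of cyclic subgroups of $\GL_2(\F_3)$. The crucial leverage is the unique-involution phenomenon for cyclic subgroups of order $4$ and $6$, combined with $-I \notin H_L$. A useful companion observation, which simplifies several substeps, is that $H_L$ must be isomorphic to $S_3$ rather than $C_6$, because every cyclic subgroup of order $6$ in $\GL_2(\F_3)$ contains $-I$.
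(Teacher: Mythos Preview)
Your proof is correct and follows the same overall strategy as the paper: translate the ramification data at $p$ into the structure of a cyclic inertia subgroup $I\le G=\GL_2(\F_3)$ and its intersections with conjugates of $H_K$ and $H_L$. Part~(1) and Part~(2) match the paper almost verbatim; the paper bounds $|D|$ via the normalizer of $I$, you via the centralizer, and for $|I|=2$ these coincide. The noteworthy difference is in Part~(3). The paper eliminates $|I|\in\{4,6,8\}$ by ad~hoc checks on how copies of $C_4$, $C_6$, and $S_3$ can intersect inside $G$, and does not explicitly address the case $|I|=3$. You instead organize everything around two structural facts---that $-I$ is the unique involution in any cyclic subgroup of $G$ of even order at least $4$, and that the normal core of $H_L$ in $G$ is trivial---which dispatches all four cases $|I|\in\{3,4,6,8\}$ uniformly. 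Your route avoids computational subgroup-intersection verifications at the cost of the short matrix calculation that every order-$4$ element squares to $-I$; both arguments are brief, but yours is somewhat more conceptual and self-contained.
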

\begin{proof}
    (1) The set $\{1,2,3,4,6,8\}$ is precisely the set of orders of the cyclic subgroups of~$G$.
    
    (2) Suppose that $e(\widetilde{L}/p) = 2$. Then the inertia groups $I(\widetilde{\mathfrak{P}}/p)$ are all isomorphic to $C_2$. Since the indices $e(\mathfrak{p}/p)$ are not all the same, if all the inertia groups were the same, we would reach a contradiction by restricting to $\widetilde{L}/K$. Hence, the inertia groups $I(\widetilde{\mathfrak{P}}/p)$ form a size 12 conjugacy class. Thus, the normalizer $N$ of each inertia group $I(\widetilde{\mathfrak{P}}/p)$ has index 12 in $G$. Each corresponding decomposition group
    $D(\widetilde{\mathfrak{P}}/p)$
    is contained in $N$, so $|D(\widetilde{\mathfrak{P}}/p)| \leq 4$. We have thus proved $f(\widetilde{L}/p) \in \{1,2\}$.

    (3) The ramification index $e(\widetilde{L}/p)$ cannot be 8, as then $4$ would have to divide the degree of the extension $\widetilde{L}/L$. If $e(\widetilde{L}/p) = 6$, then all the inertia groups $I(\widetilde{\mathfrak{P}}/p)$ must have intersection isomorphic to either $C_3$ or $C_6$ with $\text{Gal}(\widetilde{L}/L)$. 
    However, we have that $\text{Gal}(\widetilde{L}/L) \cong S_3$ in $G$ and every copy of $S_3$ only intersects one copy of $C_6$ nontrivially;
    this copy cannot constitute a conjugacy class as no $C_6$ is normal in $G$. Finally, one can verify that every intersection of a copy of $C_4$ with a copy of $S_3$ is trivial in $G$, which excludes the case $e(\widetilde{L}/p) = 4$ since ramification happens in $\widetilde{L}/L$.
\end{proof}

\begin{lemma} \label{lemma: vp(N)}
  Let $L/K/\Q$ as in Lemma~\ref{lemma: splitting in G}.
  Write $\Nm(\Delta_{L/K})=N_0 N_1 N_2$ with
  $$(N_0,6)=1,\;\; \rad(N_0)\mid\Delta_K,\;\; (N_1,6\Delta_K)=1,\;\;N_2\mid 6^m\,.$$
  Then we have
  \begin{enumerate}
  \item
  $v_p(N_1)=4$ for all $p\mid N_1$,
  \item
  $v_p(N_0)\leq v_p(\Delta_K)$ for all $p$,
  \item
  If $v_p(\Delta_K) = 3$ and $p\nmid N_2$, then $p \mid N_0$.
  \end{enumerate}
\end{lemma}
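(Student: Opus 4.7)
The plan is a case analysis on the inertia subgroup $I$ of $p$ in $G = \GL_2(\F_3)$, using the standard double-coset description of ramification and the subgroup structure of $G$. Recall that $Z(G) = \{\pm I\}$ has order $2$ with $G/Z \cong S_4$; hence $H' = \Gal(\widetilde{L}/K)$ is the preimage of a point stabilizer, so $H' \cong C_2 \times S_3$ and $Z \subseteq H'$. The subgroup $H = \Gal(\widetilde{L}/L)$ has trivial core in $G$, which forces $H \cap Z = 1$, $H' = H \times Z$, and $H \cong S_3$. For tame $p$, the inertia $I$ is cyclic with $|I| \in \{1,2,3,4,6,8\}$, and for $\mathfrak{p}_g = g\widetilde{\mathfrak{P}} \cap K$ one has $e(\mathfrak{p}_g/p) = |I|/|I \cap g^{-1}H'g|$, with the analogous formula for primes of $L$ using $H$.

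For (1), the condition $p \mid N_1$ means $p$ is tame, unramified in $K$, and ramified in $L/K$. Being unramified in $K$ forces $I$ into the core of $H'$ in $G$, which equals $\ker(G \to S_4) = Z$; since $I \neq 1$, $I = Z$. As $H \cap Z = 1$, every prime of $L$ above $p$ has ramification index $2$, so every prime of $K$ above $p$ ramifies in $L/K$; combined with $\sum_\mathfrak{p} f(\mathfrak{p}/p) = 4$ (all $e(\mathfrak{p}/p) = 1$), we conclude $v_p(N_1) = 4$.

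For (3), the factorization of a tame prime with $v_p(\Delta_K) = 3$ is forced to be $(e,f) = (4,1)$ in $K$, since this is the only way a sum $\sum f(e-1)$ of tame local contributions can equal $3$. Thus $|I|/|I \cap H'| = 4$, giving $(|I|,|I \cap H'|) \in \{(4,1),(8,2)\}$. The case $|I| = 4$ is excluded because every order-$4$ element of $\GL_2(\F_3)$ satisfies $M^2 = -I \in Z \subseteq H'$, forcing $|I \cap H'| \geq 2$. In the remaining case $|I| = 8$, the unique involution of $I$ is $-I \in Z$, so $I \cap H' = Z$ and (using $H \cap Z = 1$) $I \cap H = 1$; hence $e(\mathfrak{P}/\mathfrak{p}) = 8/4 = 2$, so $\mathfrak{p}$ ramifies in $L/K$ and $p \mid N_0$.

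For (2), fix $p \mid N_0$, so $p$ is tame with $p \mid \Delta_K$ (hence $I \not\subseteq Z$) and $p \mid \Nm(\Delta_{L/K})$. I stratify by $|I| \in \{2_{\text{nc}},3,4,6,8\}$. The case $|I| = 3$ is vacuous, since $H'$ and $H$ have the same unique $C_3$ subgroup $A_3$, giving $gIg^{-1} \subseteq H'$ iff $gIg^{-1} = A_3$ iff $gIg^{-1} \subseteq H$, so $L/K$ is unramified above $p$, contradicting $p \mid N_0$. For $|I| \in \{4,8\}$, the containment $Z \subseteq I$ (since $M^{|I|/2} = -I$) combined with the absence of $C_4$ in $H'$ forces $I \cap g^{-1}H'g = Z$ and $I \cap g^{-1}Hg = 1$ uniformly at every conjugate, yielding $e(\mathfrak{P}/\mathfrak{p}) = 2$ for every prime, whence $v_p(N_0) = \sum_\mathfrak{p} f(\mathfrak{p}/p)$ and $v_p(\Delta_K) = (|I|/2 - 1)\sum_\mathfrak{p} f(\mathfrak{p}/p) \geq v_p(N_0)$. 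The remaining cases $|I| = 2$ (non-central) and $|I| = 6$ require a more detailed enumeration of $H$- and $H'$-orbits on the conjugates of a generator of $I$, combined with the tame Frobenius constraint $F \sigma F^{-1} = \sigma^p$ that restricts the compatible decomposition groups $D \supseteq I$; in each admissible subcase a direct double-coset computation in $G$ shows $v_p(N_0) = v_p(\Delta_K)$. The main obstacle is simply the case volume in (2), but each individual verification is a short computation in the $48$-element group $\GL_2(\F_3)$, readily automated in GAP or Sage.
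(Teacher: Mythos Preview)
Your approach is correct and runs parallel to the paper's: both are case analyses on the cyclic inertia subgroup $I\le G=\GL_2(\F_3)$ using the double--coset description of ramification.  The paper organizes the cases by the value of $v_p(\Delta_K)$ and leans on the preceding lemma (splitting constraints in $G$) to rule out bad configurations, whereas you stratify directly by $|I|$ and work with the explicit identification $H'=\Gal(\widetilde L/K)\cong H\times Z$ with $H\cong S_3$, $Z=Z(G)$.  Parts (1) and (3) of your write-up are complete and essentially identical in content to the paper's arguments, just phrased more structurally.

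The one genuine gap is in part (2): you defer the cases $|I|=2$ (non-central) and $|I|=6$ to ``a direct double-coset computation'' and invoke the tame Frobenius relation $F\sigma F^{-1}=\sigma^p$ to constrain $D$.  In fact the Frobenius constraint is irrelevant here: both $v_p(\Delta_K)$ and $v_p(N_0)$ are determined by the conjugacy class of $I$ alone.  Indeed, $v_p(\Delta_K)$ is the index of a generator of $I$ acting on $G/H'$, and $v_p(N_0)$ is the number of $I$--orbits on $G/H'$ whose preimage under the $2$-to-$1$ map $G/H\to G/H'$ is a single $I$--orbit; neither involves $D$.  With this observation the two outstanding cases are immediate.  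For $|I|=6$ one has $Z\subset I$ (any order--$6$ element cubes to $-I$), and the cycle types of a generator on $(G/H',G/H)$ are $((3,1),(6,2))$, so both $I$--orbits on $G/H'$ ramify and $v_p(N_0)=2=v_p(\Delta_K)$.  For $|I|=2$ non-central the cycle types are $((2,1,1),(2,2,2,1,1))$; of the three $I$--orbits on $G/H'$, only the fixed point $gH'$ with $g^{-1}\sigma g\in zH=H'\setminus H$ has ramified preimage, giving $v_p(N_0)=1=v_p(\Delta_K)$.  Once you add these two lines your argument is complete and, if anything, more uniform than the paper's.
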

\begin{proof}
(1) Let $p$ be a prime dividing $N_1$.
    By definition, $p$ ramifies in $L$ but is unramified in $K$. Then $p$ must also be unramified in $\widetilde{K}$, a degree 24 subfield of $\widetilde{L}$, and so we deduce that $e(\widetilde{L}/p) = 2$. The restriction of any of the inertia groups $I(\widetilde{\mathfrak{P}}/p)$ to the field extension $\widetilde{L}/\widetilde{K}$ must have order 2 since $e(\widetilde{\mathfrak{P}}/p) = 2$ and
    $p$ is unramified in $\widetilde{K}$.
    We conclude that the inertia groups $I(\widetilde{\mathfrak{P}}/p)$ are all equal to $\text{Gal}(\widetilde{L}/\widetilde{K})$ since it is the unique normal subgroup of order 2 in $\text{GL}_2(\mathbb{F}_3)$. Hence
    $e(\widetilde{\mathfrak{P}}_1/\mathfrak{P}_1) = e(\widetilde{\mathfrak{P}}_2/\mathfrak{P}_2)$ for any
    primes $\widetilde{\mathfrak{P}}_1, \widetilde{\mathfrak{P}}_2$ of $\widetilde{L}$
    (where $\mathfrak{P}_1,\mathfrak{P}_2$ are the primes of $L$ lying underneath).
    It follows that the ramification indices $e(\mathfrak{P}/p)$ are the same
    for all primes $\mathfrak{P}$ of $L$ over $p$.
    Thus, since $p$ is unramified in $K$, we find that all primes of $K$ above $p$ ramify in $L/K$.

(2) Let $p$ be a prime that divides $N_0$.
Since $v_p(\Delta_L)\leq 7$, from the relation
\begin{equation}\label{E:rel}
2 v_p(\Delta_K) + v_p(N_0) = v_p(\Delta_L)\,,
\end{equation}
we know $v_p(\Delta_K)\leq 3$.
We treat cases based on the value of $v_p(\Delta_K)$,
which must be positive since $p$ ramifies in $K$.
In the case where $v_p(\Delta_K)=3$, we deduce $v_p(N_0) = 1$
and the conclusion follows immediately.
The cases where $v_p(\Delta_K)=1,2$ remain.

Factor $(p) = \mathfrak{p}_1^{e_1} \mathfrak{p}_2^{e_2} \cdots \mathfrak{p}_m^{e_m}$ in $K$
and write $f_i \colonequals f(\mathfrak{p}_i/p)$.
We will denote the splitting type of $p$ in $K$ by $\sigma_K(p) \colonequals (f_1^{e_1} \dots f_m^{e_m})$.
Let $\delta_i$ denote the indicator function for
whether the prime $\mathfrak{p}_i$ ramifies in $L/K$.  We have that 
\begin{equation}\label{E:cond}
  v_p(N_0) = \sum_i{\delta_i f_i}
  \,,
  \qquad
  v_p(\Delta_K) = \sum_{i}{(e_i-1)f_i}
  \,.
\end{equation}  
  We would like to show that the first sum is at most the value of the second sum.
  We note that it suffices to show that $\delta_i = 1$ implies $e_i>1$; that is, for any prime $\mathfrak{p}$ in $K$ above $p$, if $\mathfrak{p}$ ramifies in $L/K$, then $\mathfrak{p}$ ramifies in $K/\Q$.
    
    Now suppose that $v_p(\Delta_K) = 2$.
    Then we must have
    $\sigma_K(p)\in\{(1^2 1^2), (1^3 1), (2^2)\}$.  In all cases, we find $v_p(N_0)\leq v_p(\Delta_K)$
    from (\ref{E:cond}).

    For the final case, we suppose that $v_p(\Delta_K) = 1$.
    Since $p$ is tamely ramified in $L/\mathbb{Q}$,
    the inertia group of $p$ is generated by a permutation of index $v_p(\Delta_L)$.
    As the indices of all elements in $\text{GL}_2(\mathbb{F}_3)$ are contained in $\{3,4,6,7\}$, we have via (\ref{E:rel}) that $v_p(N_0) \in \{1,2,4,5\}$.
    On the other hand, since $p$ is ramified in $K$, it follows from (\ref{E:cond}) that $v_p(N_0)\leq 3$.
    Hence it suffices to show that $v_p(N_0)\neq 2$.
    
    
    Suppose towards a contradiction that $v_p(N_0) = 2$.
    As $v_p(\Delta_K) = 1$ we must have $\sigma_K(p)\in\{(1^22), (1^211)\}$.
    In the case that $\sigma_K(p) = (1^22)$, we may assume that the degree $2$ prime $\mathfrak{p}_2$ in $K$ above $p$
    ramifies in $L$, as otherwise the conclusion holds by (\ref{E:cond}).
    We also observe that only this prime ramifies in $L/K$, as otherwise $v_p(N_0) > 2$. Thus $\sigma_L(p) \in \{(1^21^2 2^2), (2^2 2^2)\}$. By part (3) of Lemma ~\ref{lemma: splitting in G}, we know that $e(\widetilde{L}/p) = f(\widetilde{L}/p) = 2$.  Since the inertia groups $I(\widetilde{\mathfrak{P}}/p) \cong C_2$ are not all the same, they constitute a conjugacy class of size 12. Since all the ramification indices of primes in $L$ over $p$ are the same, the size of all the intersections $I(\widetilde{\mathfrak{P}}/p) \cap S_3$ must be the same for some copy of $S_3 \leq G$. However one checks that the 12 non-normal copies of $C_2$ in $G$ intersect any copy of $S_3$ in $G$ with orders both 1 and 2, a contradiction.
    
    Moving on, we now assume $\sigma_K(p) = (1^211)$.  As we are in the situation where two primes above $p$
    ramify in $L/K$, we have
    $\sigma_L(p)\in\{(1^4 1^2 2), (1^4 1^2 11), (1^21^21^21^2), (2^21^21^2)\}$.
    The first two may be resolved easily by observing that $4$ does not divide the degree of the extension $\widetilde{L}/L$. The last may be resolved with the same logic as for the case of $\sigma_K(p) = (1^2 2)$. Thus we assume that $\sigma_L(p) = (1^21^21^21^2)$. If $f(\widetilde{L}/p) = 2$, then the same logic applies, so we can further assume that $f(\widetilde{L}/p) = 1$ by part (3) of Lemma~\ref{lemma: splitting in G}. We observe that every non-normal copy of $C_2$ in $G$ must intersect some copy of $S_3$ trivially, since the ramification indices $e(\mathfrak{P}/p)$ are all equal to 2. As before, this does not happen and so we have achieved a contradiction.

    (3) Suppose that $v_p(\Delta_K) = 3$. Then the relation $\sum_{i} (e_i-1) f_i = 3$ holds simultaneously with $\sum_{i} e_i f_i = 4$ in $K$. We deduce that $p$ totally ramifies in $K$, so we can write $(p) = \mathfrak{p}^4$ with $\mathfrak{p} \subseteq \mathcal{O}_K$. Since $p$ is tamely ramified, the inertia groups $I(\widetilde{\mathfrak{P}}/p)$ are cyclic of order a multiple of 4. The only such subgroups of $G$ are $C_4, C_8$. We know that every $C_4, C_8$ in $G$ intersects any copy of $D_6 \cong \text{Gal}(\widetilde{L}/K)$ as $C_2$, so we deduce $e(\widetilde{L}/\mathfrak{p}) = 2$. Then $e(\widetilde{L}/p) = 8$ and $I(\widetilde{\mathfrak{P}}/p) \cong C_8$ for every prime $\widetilde{\mathfrak{P}}$ in $L$ lying above $p$. We also know that $C_8 \cap S_3 = \{1\}$ for every copy of $C_8, S_3$ in $G$ (note that $S_3 \cong \text{Gal}(\widetilde{L}/L)$). This guarantees that for any prime $\mathfrak{P}$ in $L$ above $\mathfrak{p}$, we have $e(\widetilde{L}/\mathfrak{P}) = 1$. It follows that $\mathfrak{p}$ ramifies in $L/K$, and so $p \mid \Nm(\Delta_{L/K})$.
\end{proof}

\begin{lemma}\label{L:fourth}
Write $\Nm(\Delta_{L/K})=N_0 N_1 N_2$ as in Lemma~\ref{lemma: vp(N)}.
The number of quadratic extensions $L/K$ of a fixed $S_4$-quartic field $K$ 
(satisfying $\Gal(\widetilde{L}/\Q)\cong \GL_2(\F_3)$)
with $N_1\leq Y$ satisfies
$$
  \sum_{\substack{[L:K]=2\\\Gal(\widetilde{L}/\Q)\cong G\\N_1\leq Y}}1
  \ll
  |\Delta_K|^{\kappa+\eps}Y^{1/4}.
$$
\end{lemma}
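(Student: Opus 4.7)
The plan is to combine Lemma~\ref{lemma: vp(N)}(1) with the parametrization of quadratic extensions by squarefree ideals already used in the proofs of Theorem~\ref{T:3} and Section~\ref{S:C23S4}. The crucial point is that Lemma~\ref{lemma: vp(N)}(1) forces $N_1$ to have the rigid fourth-power form $N_1=m^4$ with $m\colonequals\rad(N_1)$ squarefree and coprime to $6\Delta_K$; this is exactly the structure that turns what would otherwise be a bound of size $Y$ into one of size $Y^{1/4}$.

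First, following the parametrization in~\cite{MR1918290} (equivalently, Equation~3 of Section~3.2 of~\cite{MR4808549}), I would write
\[
\sum_{\substack{[L:K]=2\\\Gal(\widetilde{L}/\Q)\cong G\\N_1\leq Y}}1
\;\leq\; |S_2(K)|\cdot \#\{\mathfrak{b}\subseteq\mathcal{O}_K\text{ squarefree satisfying the relevant ramification conditions}\}\,,
\]
where $\mathfrak{b}$ encodes the tame part of the relative discriminant. I would then factor $\mathfrak{b}=\mathfrak{b}_1\mathfrak{b}_2$, where $\mathfrak{b}_1$ is supported on primes of $K$ coprime to $6\Delta_K$ and $\mathfrak{b}_2$ on the remaining (bad) primes. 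The number of possible $\mathfrak{b}_2$ is bounded by the number of subsets of primes of $K$ lying above primes dividing $6\Delta_K$, which is $\ll 2^{\omega(6\Delta_K)}\ll |\Delta_K|^\eps$.

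Second, I would invoke Lemma~\ref{lemma: vp(N)}(1) to show that $\mathfrak{b}_1$ is completely determined by $m$. Indeed, for every rational prime $p\mid m$, every prime of $K$ above $p$ must ramify in $L/K$, so one must have $\mathfrak{b}_1=\prod_{p\mid m}\prod_{\mathfrak{p}\mid p}\mathfrak{p}$. Consequently, the number of admissible $\mathfrak{b}_1$ is at most the number of squarefree positive integers $m\leq Y^{1/4}$ coprime to $6\Delta_K$, which is $O(Y^{1/4})$.

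Finally, combining these pieces with the bound $|S_2(K)|\ll|\Cl(K)[2]|\ll|\Delta_K|^{\kappa+\eps}$ from Proposition~3.2 of~\cite{MR1918290} together with~\cite{MR4155220}, I would conclude
\[
\sum_{\substack{[L:K]=2\\\Gal(\widetilde{L}/\Q)\cong G\\N_1\leq Y}}1
\;\ll\; |\Delta_K|^{\kappa+\eps}\cdot Y^{1/4}\,.
\]
The main obstacle is ensuring that the decomposition $\mathfrak{b}=\mathfrak{b}_1\mathfrak{b}_2$ genuinely isolates the fourth-power structure, and verifying that the group-theoretic constraint from Lemma~\ref{lemma: vp(N)}(1) rigidifies $\mathfrak{b}_1$ as claimed; once these points are pinned down, the remaining estimates are routine.
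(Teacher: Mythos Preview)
Your proposal is correct and follows essentially the same approach as the paper's proof: parametrize quadratic extensions by squarefree ideals times $S_2(K)$, split the ideal into a factor supported above $6\Delta_K$ (contributing $|\Delta_K|^\eps$) and a factor supported at primes dividing $N_1$ (contributing $Y^{1/4}$ via the fourth-power structure from Lemma~\ref{lemma: vp(N)}(1)), and bound $|S_2(K)|\ll|\Cl(K)[2]|\ll|\Delta_K|^{\kappa+\eps}$. The only minor differences are that you observe $\mathfrak{b}_1=\prod_{p\mid m}\prod_{\mathfrak{p}\mid p}\mathfrak{p}$ is rigidly determined by $m=\rad(N_1)$, whereas the paper more coarsely bounds this count by $\sum_{m\leq Y^{1/4}}a_{m^4}(K)$, and that your bound on the number of $\mathfrak{b}_2$ should read $2^{4\omega(6\Delta_K)}$ (there can be up to four primes of $K$ above each rational prime) rather than $2^{\omega(6\Delta_K)}$, though both are $\ll|\Delta_K|^\eps$.
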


\begin{proof}
The type of estimate we make here essentially appears in~\cite{MR2904935}.
In addition to the primes dividing $N_1$, such a field $L$ is potentially
ramified at the primes dividing
$6\Delta_K$, but no others.  Therefore, using the same idea as (\ref{E:AT2}),
the number of such extensions is bounded by a constant times $\left|\Cl(K)[2]\right|$
times the number of squarefree ideals $\mathfrak{a}$ in $K$ supported at primes $\mathfrak{p}$
dividing $N_0 N_1 N_2$.  The number of squarefree $\mathfrak{a}$ supported at primes
dividing $N_0 N_2$ is bounded by $2^{4\omega(6\Delta_K)}$.
By Lemma~\ref{lemma: vp(N)}, 
we must have that $N_1$ is a $4$-th power, and thus the number of
$N_1\leq Y$ is bounded by $\sum_{m\leq Y^{1/4}}a_{m^4}(K)$.
Here the values $a_n(K)\leq \tau(n)^4$ are the coefficients of the Dedekind zeta function
of $K$, which represent the number of ideals in $K$ with norm $n$.
Therefore
$$
\sum_{\substack{[L:K]=2\\\Gal(\widetilde{L}/\Q)\cong G\\N_1\leq Y}}1
  \ll 2^{4\omega(6\Delta_K)}|\Cl(K)[2]|\sum_{m\leq Y^{1/4}}a_{m^4}(K)
  \ll |\Cl(K)[2]|\cdot |\Delta_K|^\eps Y^{1/4}
  \,.
$$
\end{proof}

In~\cite{BCT}, Bhargava, Cojocaru, and Thorne prove a tail estimate
for counting $S_5$-fields.  We require the analogous result for 
counting $S_4$-fields. 

\begin{lemma}\label{L:tail}
Let $\N^Z_4(S_4,X)$ be the number of $S_4$-quartic fields with $|\Delta_K|\leq X$
such that there exists a squarefree $q>Z$ with $q^2\mid \Delta_K$. Then
$$
  {\N}^Z_4(S_4,X)\ll X^{11/12+\eps}+\frac{X}{Z^{1-\eps}}
  \,.
$$
\end{lemma}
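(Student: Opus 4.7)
The plan is to adapt the tail estimate of Bhargava, Cojocaru, and Thorne for $S_5$-quintic fields to the $S_4$-quartic setting, following the same template that underlies the Belabas-Bhargava-Pomerance power savings for $\N_4(S_4,X)$. The central ingredient will be a \emph{pointwise local bound}: for any squarefree integer $q$,
$$
S(q)\colonequals\#\{K\text{ an $S_4$-quartic field}:|\Delta_K|\leq X,\,q^2\mid\Delta_K\}\ll Xq^{-2+\eps},
$$
which is trivially zero for $q>X^{1/2}$. This should follow from Bhargava's parametrization of maximal quartic rings by $\GL_2(\Z)\times\mathrm{SL}_3(\Z)$-orbits on pairs of integral ternary quadratic forms, combined with a sieve to maximal orders: at each prime $p\mid q$, the condition $p^2\mid\Delta_K$ carves out a codimension-two subset of the $p$-adic parameter space, contributing a local density of order $p^{-2}$, and these local factors multiply up to an $\eps$-loss that absorbs the sieve error terms.

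With the pointwise bound in hand, I would split the count by the largest prime factor of the squarefree witness $q>Z$, setting $W\colonequals X^{1/12}$. In the \emph{large-prime case}, where some prime $p\mid q$ exceeds $W$, summing $S(p)\ll Xp^{-2+\eps}$ over primes $p\in(W,X^{1/2}]$ gives
$$
\sum_{W<p\leq X^{1/2}}Xp^{-2+\eps}\ll XW^{-1+\eps}=X^{11/12+\eps},
$$
where the $O(\log X)$ multiplicity from multiple large primes possibly dividing $\Delta_K$ is absorbed into $X^\eps$. In the complementary case, every prime factor of $q$ is at most $W$; here I would simply drop the smoothness condition and bound the contribution by
$$
\sum_{Z<q\leq X^{1/2}}Xq^{-2+\eps}\ll XZ^{-1+\eps}.
$$
Combining the two cases yields the claimed estimate.

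The hard part will be establishing the pointwise bound $S(q)\ll Xq^{-2+\eps}$ uniformly in $q$, particularly when $q$ is close to $X^{1/2}$ or has many prime factors. While the bound is routine for small $q$ with few prime factors, uniformity across the full range demands careful quantitative control of error terms in Bhargava's orbit count and a uniform sieve to $p$-maximal rings at many primes simultaneously. This is precisely the type of estimate that underpins both the Belabas-Bhargava-Pomerance power savings for $S_4$ and the Bhargava-Cojocaru-Thorne tail estimate for $S_5$; adapting their arguments to our setting should be the main technical task.
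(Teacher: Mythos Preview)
Your approach differs from the paper's and has a genuine gap at its core. The paper follows the $S_5$ argument of Bhargava--Cojocaru--Thorne essentially verbatim, transported to Bhargava's parametrization of quartic rings: it sets up the codimension-two scheme $\mathcal{Y}=\{\disc=\partial\disc/\partial y=0\}$ in the $12$-dimensional space of pairs of ternary quadratic forms, discards the cusp $a_{11}=0$ at cost $O(X^{11/12})$ via Lemma~11 of Bhargava's quartic paper, and then applies Theorem~13 of BCT (the geometric/Ekedahl sieve) inside the averaging integral over the fundamental domain. That theorem is a \emph{tail} estimate: for each box appearing in the integral it bounds in one stroke the number of lattice points landing in $\mathcal{Y}(\Z/q\Z)$ for \emph{some} squarefree $q>Z$, producing two contributions $1/r_{\min}$ and $1/Z$ which after integration become $X^{11/12+\eps}$ and $X/Z^{1-\eps}$.

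Your plan instead rests on a uniform \emph{pointwise} bound $S(q)\ll Xq^{-2+\eps}$, and this is where the gap lies. Neither source you cite establishes it: BBP bounds the number of quartic \emph{rings} non-maximal at $p$ by $O(X/p^2)$ via an overring argument, which is a different local condition from $p^2\mid\Delta_K$ for a maximal order; and BCT proves only the tail form of the sieve, not a pointwise one. The obstruction is real: once $q$ (or even a single prime factor of $q$) exceeds the smallest side length $\asymp X^{1/12}$ of the boxes in the averaging integral, lattice-point counting modulo $q$ no longer delivers a $q^{-2}$ saving, and your $W$-smoothness restriction in the second case does not help since the smooth $q$ can still be as large as $X^{1/2}$. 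Observe too that if the uniform pointwise bound actually held, summing over all squarefree $q>Z$ would already give $\N^Z_4(S_4,X)\ll XZ^{-1+\eps}$ with no $X^{11/12}$ term at all; your case split would be superfluous and the conclusion strictly stronger than the lemma. That the geometric sieve does not yield this stronger bound is a clear signal that the pointwise input you need is not available from the methods you invoke.
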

\begin{proof}
In~\cite{BCT}, the analogous result for $S_5$-quintic fields is proved.  We mimic the same proof,
but use the parametrization for quartic fields given in~\cite{MR2183288}.

For a $G_{\mathbb{Z}}$-invariant set $S \subseteq V_{\mathbb{Z}}$, let $\N(S,X)$ denote the number of absolutely irreducible $G_{\mathbb{Z}}$-orbits of $S$ having discriminant bounded by $X$. For a suitable box $H$ (which we determine later), with characteristic function $\Phi = \Phi_H$, we can sum through the three nondegenerate orbits of $G_{\mathbb{R}}$ on $V_{\mathbb{R}}$ to obtain
\begin{equation}\label{E:S4-tail-def}
\N(S, X) = 
\sum_{i=0}^2
\frac{
\int_{v \in V_{\mathbb{R}}^{(i)}}
{\Phi(v)\#\{x \in \mathcal{F}v \cap S \text{ abs.~irr.} :   0 < |\disc(x)| < X\} |\disc(v)|^{-1} dv}
}{
M_i
},
\end{equation}
where $M_i = M_i(H)$ is a constant depending only on $i$ and $H$. Even if $S \subseteq V_{\mathbb{Z}}$ is not invariant under the action of $G_{\mathbb{Z}}$, we define $\N(S,X)$ to be the quantity in Equation~\eqref{E:S4-tail-def}.
Following Lemma 7 of \cite{BCT}, we fix any of the 12 coordinates of $V_{\mathbb{Z}}$ as $y$ and let $\mathcal{Y} \subseteq \mathbb{A}_{\mathbb{Z}}^{12}$ be defined by
\[
\mathcal{Y} \colonequals \left\{v \colon \disc(v) = \frac{\partial \disc}{\partial y}(v) = 0 \right\}\,,
\]
which is a subscheme of $\mathbb{A}_{\mathbb{Z}}^{12}$ of codimension 2. 

We know by the discussion in Section~4.2 of \cite{bhargava2014geometricsievedensitysquarefree} that for $v \in V_{\mathbb{Z}}$ absolutely irreducible, if $p^2 \mid \disc(v)$ then $\disc$ is strongly a multiple of $p^2$ at $v$. Moreover, by Lemma 3.6 of \cite{bhargava2014geometricsievedensitysquarefree}, we know that this implies $v \pmod{p} \in \mathcal{Y}(\mathbb{F}_p)$. Thus, it suffices for our purposes to bound the quantity $\N(S_Z(\mathcal{Y}), X)$, where we define $S_Z(\mathcal{Y})$ to be the intersection of the set
\[
\{v \in G_{\mathbb{Z}} \backslash V_{\mathbb{Z}} \colon \disc(v) \neq 0 \text{ and } v \pmod{q} \in \mathcal{Y}(\mathbb{Z}/q \mathbb{Z}) \text{ for some squarefree $q > Z$}\}
\]
with the set of all absolutely irreducible elements of $ V_{\mathbb{Z}}$.
By Lemma 11 of \cite{MR2183288}, the contribution from $v = (A,B)$ with $a_{11} = 0$ is $O(X^{11/12})$, so
\[
\N(S_Z(\mathcal{Y}),X) 
\ll
\N(S_Z(\mathcal{Y}) \cap \{a_{11} \neq 0\}, X) + O(X^{11/12}).
\]
Then we have by Equation 7 of \cite{MR2183288} that
\begin{equation}\label{S4-tail-eq1}
\begin{split}
\N(S_Z(\mathcal{Y}) \cap \{a_{11} \neq 0\}, X) \\
    \ll
\int_{g \in K N'A'^{-1} \Lambda} &
\sum_{\substack{
        x \in S_Z(\mathcal{Y})\\
        a_{11} \neq 0\\
        |\disc(x)| < X}}
\Phi(kna^{-1} \lambda x) s_1^{-2}s_2^{-6}s_3^{-6}\, 
d^{\times}\lambda\, d^{\times}s\, dn\, dk.
\end{split}
\end{equation}
Now let
\[
H \colonequals \{(A,B) \in V_{\mathbb{R}} \colon |a_{ij}|,|b_{ij}| \leq 10 \text{ and $\disc$}(A,B) \geq 1\}.
\]
Then the compact set
\[
H' \colonequals \{(A,B) \in V_{\mathbb{R}} \colon |a_{ij}|,|b_{ij}| \leq 60 \text{ and $\disc$}(A,B) \geq 1\}
\]
contains $KN'H$. Equation~\eqref{S4-tail-eq1} then becomes
\[
\ll \int_{\lambda = X^{-1/12}}^{c} \int_{s_1,s_2,s_3 = 1/2}^{\infty} 
\sum_{
    \substack{
    (A,B) \in S_Z(\mathcal{Y})\\
    a_{11} \neq 0\\
    |\disc(A,B)| < X
    }
} \Phi'(\lambda a(s)^{-1}(A,B))
s_1^{-2} s_2^{-6} s_3^{-6} \ d^{\times }s \ d^{\times} \lambda,
\]
where $\Phi'$ denotes the characteristic function for $H'$. We recall that $\lambda  a(s)^{-1}(A,B) \in H'$ only if $(A,B) \in B_0$, where $B_0$ is the box defined in Equation 10 of \cite{MR2183288}. Thus, we have 
\begin{align*}
    \N(S_Z(\mathcal{Y})\cap \{a_{11}\neq 0\}, X)
& \\ \ll \int_{\lambda = X^{-1/12}}^{c} &\int_{s_1,s_2,s_3 = 1/2}^{\infty} 
 \sum_{
    \substack{
    (A,B) \in S_Z(\mathcal{Y})\\
    a_{11} \neq 0\\
    |\disc(A,B)| < X
    }
} \Phi_0(A,B)
s_1^{-2} s_2^{-6} s_3^{-6} \ d^{\times }s \ d^{\times} \lambda,
\end{align*}
\[ 
\]
where $\Phi_0$ is the characteristic function for $B_0$. Each side length of $B_0$ can be written in the form $r_i' \cdot \frac{120}{\lambda}$, and so we can invoke Theorem 13 of~\cite{BCT} to bound the integrand with $n = 12$, $k = 2$, $r_i = r_i'/\lambda$, and $B$ a centrally symmetric hypercube of side length $120$. We note that the product of the $r_i'$ is equal to 1, so $T = \lambda^{-12}$, which gives  
\[
\sum_{
    \substack{
    (A,B) \in S_Z(\mathcal{Y})\\
    a_{11} \neq 0\\
    |\disc(A,B)| < X
    }
} \Phi_0(A,B)
\ll
\lambda^{-12 - \epsilon}
\left(
\frac{1}{r_1} + \frac{1}{Z}
\right)
\ll 
\lambda^{-12 - \epsilon}
\left(
\lambda s_1 s_2^4 s_3^2 + \frac{1}{Z}
\right).
\]
Here we have used $r_i' \gg s_1^{-1}s_2^{-4} s_3^{-2}$, which follows from the inequalities defining $B_0$ and the fact that the $s_i$ are absolutely bounded from below. Plugging this back into the integral gives
\[
\N(S_Z(\mathcal{Y}) \cap \{a_{11} \neq 0\}, X) \ll 
X^{11/12 + \epsilon}
+
\frac{X}{Z^{1-\epsilon}}.
\]
Thus $\N(S_Z(\mathcal{Y}), X) \ll X^{11/12 + \epsilon}
+
X/Z^{1-\eps},
$
as desired.

\end{proof}

\begin{proof}[Proof of Theorem~\ref{T:2} for $G=\GL_2(\F_3)$]
As in Lemma~\ref{lemma: vp(N)} we write $\Nm(\Delta_{L/K})=N_0 N_1 N_2$
with $N_1$ being a $4$-th power in $\Z$.
Moreover we write $\Delta_K=3^a N_0 N_0'$ with $(N_0',3)=1$;
if $p\mid N_0'$, then  $p^2\mid\Delta_K$ by the proof of Lemma~\ref{lemma: vp(N)}. 
As we only seek an upper bound,
we will use the fact that $\Nm(\Delta_{L/K})\leq X$ implies
$N_0 N_1\leq X$ to avoid the wildly ramified primes in our count.
Let $Y>0$ be a parameter to be determined later.
We will condition on the value of $\rad(N_0')$.  

First, we treat the case where $\rad(N_0')\leq Y$.
Observe that $\Delta_K^2 N_0 N_1\leq |\Delta_L|\leq X$
and $N_0=\Delta_K/(3^a N_0')$
implies $N_1\leq 3^a N_0' X/\Delta_K^3$.
Moreover, in this case 
$N_0'\leq Y^2$
(by Lemma~\ref{lemma: vp(N)}, Part 3)
and hence $N_1\leq 3^a XY^2/\Delta_K^3$.
Making use of Lemma~\ref{L:fourth}, we estimate:
\begin{align*}
\sum_{|\Delta_K|\leq \sqrt{X}}
\sum_{\substack{N_0 N_1\leq X/\Delta_K^2\\ \rad(N_0')\leq Y}}
1
&\leq
\sum_{|\Delta_K|\leq \sqrt{X}}
\sum_{\substack{N_1\leq\frac{3^a XY^2}{\Delta_K^3}}}
1
\\
&\ll
\sum_{|\Delta_K|\leq \sqrt{X}}
|\Delta_K|^{\kappa+\eps}\left(\frac{XY^2}{|\Delta_K|^3}\right)^{1/4}
\\[1ex]
&=
X^{1/4}Y^{1/2}\sum_{|\Delta_K|\leq \sqrt{X}}
|\Delta_K|^{\kappa-3/4+\eps}
\\[1ex]
&\ll
X^{3/8+\kappa/2+\eps}Y^{1/2}
\,.
\end{align*}

Now we treat the cases $\rad(N_0')>Y$.  In this case there exists a squarefree $q>Y$
such that $q^2\mid\Delta_K$; namely, $q=\rad(N_0')$.
This makes Lemma~\ref{L:tail} applicable,
which takes the form $O(X^{1-\delta+\eps}+X/Y^{1-\eps})$
with $\delta=1/12$.
We estimate:
\begin{align*}
\sum_{|\Delta_K|\leq \sqrt{X}}
\sum_{\substack{N_0 N_1\leq X/\Delta_K^2\\ \rad(N_0')>Y}}
1
&\leq
\sum_{\substack{|\Delta_K|\leq \sqrt{X}\\\exists q>Y\,q^2\mid\Delta_K}}
\sum_{\substack{N_1\leq X /\Delta_K^2\\ }}
1
\\
&\ll
\sum_{\substack{|\Delta_K|\leq \sqrt{X} \\\exists q>Y\,q^2\mid\Delta_K}}
|\Delta_K|^{\kappa+\eps}\left(\frac{X}{\Delta_K^2}\right)^{1/4}
\\[1ex]
&=
X^{1/4}
\sum_{\substack{|\Delta_K|\leq \sqrt{X}\\\exists q>Y\,q^2\mid\Delta_K}}
|\Delta_K|^{\kappa-1/2+\eps}
\\[1ex]
&\ll
X^{1/2+\kappa/2-\delta/2+\eps}+X^{1/2+\kappa/2 + \eps}Y^{-1}
\,.
\end{align*}

Summarizing, we have a bound of 
\begin{align*}
&\ll 
X^{3/8+\kappa/2}Y^{1/2}
+
X^{1/2+\kappa/2-\delta/2+\eps}+X^{1/2+\kappa/2 + \epsilon}Y^{-1}
\\
&\ll
X^{5/12+\kappa/2+\eps}+X^{1/2+\kappa/2-\delta/2+\eps}
\,,
\end{align*}
where we have chosen $Y=X^{1/12}$.
Using the values of $\delta$ and $\kappa$ we have available to us,
we are forced to accept the second error term
which has exponent less than $0.598$.
\end{proof}

\section{An upper bound for octic fields with Galois group $Q_8\rtimes S_4$}\label{S:Q8S4}
In this section we treat the fourth and final group appearing in the statement of Theorem~\ref{T:2}. Similar to Lemma~\ref{lemma: vp(N)}, below we collect two facts regarding the discriminant of an $S_4$-quartic field in the case when $G = Q_8 \rtimes S_4$.

\begin{lemma} \label{v_p(D1)}
    Let $G = Q_8 \rtimes S_4$. Write $\Delta_K=D_0 D_1 D_2$ with 
    \[
    (6,D_0)=1, \;\;
    \rad(D_0)\mid \Nm(\Delta_{L/K}), \;\;
    (D_1, 6 \Nm(\Delta_{L/K}))=1, \;\;
    D_2\mid 6^m.
    \]
    Then we have
    \begin{enumerate}
        \item $D_0^{1/3} \leq N_0 \leq D_0^3$,

        \item $D_1 = \rad(D_1)^2$.
    \end{enumerate}
\end{lemma}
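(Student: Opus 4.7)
The approach mirrors the treatment of $\GL_2(\F_3)$ in Lemma~\ref{lemma: vp(N)}. For part~(1), the bound $D_0^{1/3}\leq N_0 \leq D_0^3$ reduces to the pointwise inequality $1/3 \leq v_p(N_0)/v_p(\Delta_K) \leq 3$ for each prime $p$ in the common support of $D_0$ and $N_0$, equivalently each $p>3$ ramified both in $K$ and in $L/K$. Since $(p,6)=1$ and $|G|=192$, the prime $p$ is tamely ramified in $\widetilde{L}$, so we have the standard formulas $v_p(\Delta_K)=\sum_i (e_i-1)f_i$ and $v_p(N_0)=\sum_i \delta_i f_i$, where $\mathfrak{p}_i$ runs over primes of $K$ above $p$ with $\sum_i e_i f_i = 4$, and $\delta_i\in\{0,1\}$ indicates ramification of $\mathfrak{p}_i$ in $L/K$. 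Enumerating the six splitting types with $v_p(\Delta_K)>0$ (namely $(1^4),(1^3,1),(1^2,1,1),(1^2,1^2),(1^2,2),(2^2)$) yields $v_p(\Delta_K)\in\{1,2,3\}$ and $1\leq v_p(N_0)\leq 3$, from which a direct case check establishes the ratio bound. No structural property of $Q_8\rtimes S_4$ is needed here beyond tameness.

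Part~(2) is more subtle and will use the structure of $G = Q_8\rtimes S_4$. The hypothesis $p\mid D_1$ means that $p>3$ ramifies in $K$ but no prime of $K$ above $p$ ramifies in $L/K$; the goal is to show $v_p(\Delta_K)=2$, which amounts to ruling out the splitting types $(1^4)$ (giving $v_p(\Delta_K)=3$) and both $(1^2,1,1)$ and $(1^2,2)$ (giving $v_p(\Delta_K)=1$). The plan is to use the Galois-theoretic description: the inertia group $I=I(\widetilde{\mathfrak{P}}/p)$ is a cyclic subgroup of $G$, and the condition that every prime of $K$ above $p$ is unramified in $L/K$ translates into the constraint $gIg^{-1}\cap H_K\subseteq H_L$ for every $g\in G$, where $H_K$ and $H_L$ are the subgroups fixing $K$ and $L$, of indices $4$ and $8$ in $G$ respectively. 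In the totally ramified case $(1^4)$, we have $4\mid |I|$ and $I$ acts transitively on $G/H_K$; in the $(1^2,1,1)$ and $(1^2,2)$ cases, $|I|=2$ and $I$ has exactly one nontrivial orbit (of size $2$) on $G/H_K$. For each of these three regimes, I would enumerate the relevant cyclic subgroups of $G$ up to conjugacy (via SageMath, as in Lemma~\ref{L:1}) and verify that none can satisfy both the required orbit structure on $G/H_K$ and the constraint $gIg^{-1}\cap H_K\subseteq H_L$, thereby contradicting the hypothesis and forcing $v_p(\Delta_K)=2$.

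The main obstacle is the explicit group-theoretic case analysis in Part~(2). The group $Q_8\rtimes S_4$ has order $192$ with several conjugacy classes of cyclic subgroups and of index-$4$ subgroups having $S_4$ as a quotient; verifying the constraint $gIg^{-1}\cap H_K\subseteq H_L$ across all conjugates of $I$ and all valid pairs $(H_K,H_L)$ is combinatorial but tedious, and I would rely on computer algebra to enumerate and check the cases. In parallel with part~(3) of Lemma~\ref{lemma: vp(N)}, I expect the totally ramified case to be the cleanest: inertia must lie in a small family of cyclic subgroups of order divisible by $4$, and its intersection with a copy of $\Gal(\widetilde{L}/L)\subset G$ must be large enough to witness the unramifiedness of $L/K$, a condition one expects to fail by inspection of the subgroup lattice.
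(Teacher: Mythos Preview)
Your proposal is correct and follows essentially the same route as the paper: both parts reduce to prime-by-prime valuation constraints, with Part~(2) handled by showing that $v_p(\Delta_K)\in\{1,3\}$ forces some prime of $K$ above $p$ to ramify in $L/K$ via a group-theoretic check on cyclic inertia subgroups of $Q_8\rtimes S_4$ and their intersections with $H_K\supset H_L$. The paper's execution of the $v_p(\Delta_K)=1$ case is slightly sharper---observing that $v_p(\Delta_L)=2$ forces the inertia generator to have index~$2$ in $S_8$, hence order~$2$, which cuts the enumeration down to a single conjugacy class---but your systematic computer enumeration would reach the same conclusion.
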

\begin{proof}
    (1) Since $v_p(\Delta_K) \leq 3$ and $\text{rad}(D_0) = \text{rad}(N_0)$, the first inequality is immediate. If $v_p(N_0) \in \{1,2,3\}$, the second inequality is proved. To this end, we note that $v_p(N_0) \neq 4$ since the indices of all permutations in $\text{8T40}$ are $\{2,4,8\}$.

    (2) It is enough to show that if $v_p(\Delta_K) \notin \{0,2\}$, then $v_p(\Nm(\Delta_{L/K})) > 0$. If $v_p(\Delta_K) = 3$, then the above lemma gives us the desired inequality. So now assume that $v_p(\Delta_K) = 1$. This implies that there is one ramified prime in $K$ lying over $p$ having inertia degree 1; that is, $\sigma_K(p) \in \{ (1^2 11), (1^2 2)\}$. Suppose towards a contradiction that $v_p(\Nm(\Delta_{L/K})) = 0$. Then $v_p(\Delta_L) = 2$ and the inertia groups $I(\widetilde{\mathfrak{P}}/p)$ are generated by a permutation $\tau$ of index 2 in $8\text{T}40$. All such permutations have order two, so $e(\widetilde{L}/p) = 2$. However, for every such $\langle \tau \rangle$, one can check that for any $D \leq G$ containing $\langle \tau \rangle$ as a normal subgroup, the intersection of $D$ with any $C_2 \times S_4 \cong \text{Gal}(\widetilde{L}/K)$ is nontrivial. We immediately conclude that for any prime ideal $\mathfrak{p}$ in $K$ above $p$, $\mathfrak{p}$ ramifies in $\widetilde{L}/K$. However, this is a contradiction, as $e(\widetilde{L}/p) = 2$ and there is a prime ideal of $K$ above $p$ that is already ramified.
\end{proof}

\begin{proof}[Proof of Theorem~\ref{T:2} for $G=Q_8\rtimes S_4$]
We follow the proof for $\GL_2(\F_3)$ but some details are different. We note that in the subsequent calculations, we use the following analog of Lemma~\ref{L:fourth}, which is proved in the exact same way, except where $v_p(N_1) = 4$ is replaced by $v_p(N_1) \geq 2$:
\[
\sum_{
\substack{
    [L:K] = 2\\
    \Gal(\widetilde{L}/\mathbb{Q}) \cong G\\
    N_1 \leq Y\\
}
}{1}
\ll |\Delta_K|^{\kappa + \epsilon}Y^{1/2}.
\]
We write $\Nm(\Delta_{L/K})=N_0 N_1 N_2$
where these quantities are defined the same way as in Section~\ref{S:GL2F3}.
The analog of Lemma~\ref{lemma: vp(N)} is not totally true.
We have $v_p(N_1)\geq 2$ for all $p\mid N_1$, which is slightly weaker than Part 1 of the Lemma.
We condition on the value of $\rad(D_1)$.

First, we consider the case where $\rad(D_1)\leq Y$.
Notice that the condition $|\Delta_L|\leq X$ implies $N_1\leq X/(\Delta_K^2 N_0)$.
Further, this implies
$$
  N_1\leq \frac{X}{\Delta_K^2 D_0^{1/3}}\leq\frac{2^a 3^bXY^{2/3}}{\Delta_K^{2+1/3}}
  \,.
$$
Indeed, observe that $|\Delta_K|=D_0 D_1 D_2\leq 2^a 3^b D_0 Y^2$,
which implies $D_0\geq \Delta_K/(2^a 3^bY^2)$. 

We estimate: 
\begin{align*}
\sum_{
\substack{
|\Delta_K|\leq \sqrt{X}\\
\rad(D_1) \leq Y\\
}
}
\sum_{
\substack{
N_0 N_1\leq X/\Delta_K^2
}
}
1
&\leq
\sum_{|\Delta_K|\leq \sqrt{X}}
\sum_{\substack{N_1\leq\frac{2^a 3^bXY^{2/3}}{\Delta_K^{2+1/3}}}}
1
\\
&\ll
\sum_{|\Delta_K|\leq \sqrt{X}}
|\Delta_K|^{\kappa + \eps}
\left( 
\frac{XY^{2/3}}{\Delta_K^{2 + 1/3}}
\right)^{1/2}
\\
&
=
X^{1/2}Y^{1/3}
\sum_{|\Delta_K|\leq \sqrt{X}}
|\Delta_K|^{\kappa - 7/6 + \eps}
\\
&\ll X^{1/2+\kappa/2-1/12+\eps}Y^{1/3}.
\end{align*}
In the case where $\rad(N_0')>Y$, we have
\begin{align*}
    \sum_{
\substack{
|\Delta_K|\leq \sqrt{X}\\
\rad(D_1) > Y\\
}
}
\sum_{
\substack{
N_0 N_1\leq X/\Delta_K^2
}
}
1
&\leq
\sum_{
\substack{|\Delta_K|\leq \sqrt{X}\\\exists q>Y\,q^2\mid\Delta_K}
}
\sum_{\substack{N_1\leq X/\Delta_K^2\\ }}
1
\\
&\ll
\sum_{
\substack{|\Delta_K|\leq \sqrt{X}\\ \exists q>Y\,q^2\mid\Delta_K}
}
|\Delta_K|^{\kappa + \eps}
\left(
\frac{X}{\Delta_K^2}
\right)^{1/2}
\\[1ex]
&=
X^{1/2}
\sum_{\substack{|\Delta_K|\leq \sqrt{X}\\\exists q>Y\,q^2\mid\Delta_K}}
|\Delta_K|^{\kappa-1+\eps}
\\ &
\ll X^{1/2+\kappa/2-\delta/2+\eps}+X^{1/2+\kappa/2+\eps}Y^{-1}.
\end{align*}
Then we choose $Y=X^{1/16}$ and the final estimate is
\[
\ll X^{1/2+\kappa/2-1/16}+X^{1/2+\kappa/2-\delta/2+\eps}.
\]
As before, we are forced to accept the second error term due to the values of $\delta$, $\kappa$ that we
have available.




\end{proof}

\begin{proof}[Proof of Theorem~\ref{T:1}]
Combine Lemma~\ref{L:1}, Theorem~\ref{T:2}, and Theorem~\ref{T:3}.
\end{proof}

\section{Acknowledgements}
The authors would like to thank Brandon Alberts and Frank Thorne for their helpful comments.
This research was conducted as part of the Research Experience for Undergraduates and
Teachers program at California State University, Chico, supported by NSF grant DMS2244020.

\bibliography{malle}{}

@misc{bhargava2014geometricsievedensitysquarefree,
      title={The geometric sieve and the density of squarefree values of invariant polynomials}, 
      author={Manjul Bhargava},
      year={2014},
      eprint={1402.0031},
      archivePrefix={arXiv},
      primaryClass={math.NT},
      url={https://arxiv.org/abs/1402.0031}, 
}

@article {MR3882158,
    AUTHOR = {Dummit, Evan P.},
     TITLE = {Counting {$G$}-extensions by discriminant},
   JOURNAL = {Math. Res. Lett.},
  FJOURNAL = {Mathematical Research Letters},
    VOLUME = {25},
      YEAR = {2018},
    NUMBER = {4},
     PAGES = {1151--1172},
      ISSN = {1073-2780,1945-001X},
   MRCLASS = {11R29 (11R21 11R32)},
  MRNUMBER = {3882158},
MRREVIEWER = {Alessandro\ Cobbe},
       DOI = {10.4310/MRL.2018.v25.n4.a5},
       URL = {https://doi.org/10.4310/MRL.2018.v25.n4.a5},
}

@article {MR4047213,
    AUTHOR = {Alberts, Brandon},
     TITLE = {The weak form of {M}alle's conjecture and solvable groups},
   JOURNAL = {Res. Number Theory},
  FJOURNAL = {Research in Number Theory},
    VOLUME = {6},
      YEAR = {2020},
    NUMBER = {1},
     PAGES = {Paper No. 10, 23},
      ISSN = {2522-0160,2363-9555},
   MRCLASS = {11R21 (11N45 11R37)},
  MRNUMBER = {4047213},
MRREVIEWER = {J\"urgen\ Kl\"uners},
       DOI = {10.1007/s40993-019-0185-7},
       URL = {https://doi.org/10.1007/s40993-019-0185-7},
}

@article {BCT,
    AUTHOR = {Bhargava, M. and Cojocaru, A. and Thorne, F.},
     TITLE = {Non-${S}_5$ quintic extensions of bounded discriminant},
   JOURNAL = {},
  FJOURNAL = {},
    VOLUME = {},
      YEAR = {2025},
    NUMBER = {},
     PAGES = {},
      ISSN = {},
   MRCLASS = {},
  MRNUMBER = {},
MRREVIEWER = {},
       DOI = {},
       URL = {},
}

@article {MR4858206,
    AUTHOR = {Barquero-Sanchez, Adrian and Calvo-Monge, Jimmy},
     TITLE = {On the embedding of {G}alois groups into wreath products},
   JOURNAL = {Comm. Algebra},
  FJOURNAL = {Communications in Algebra},
    VOLUME = {53},
      YEAR = {2025},
    NUMBER = {2},
     PAGES = {730--760},
      ISSN = {0092-7872,1532-4125},
   MRCLASS = {12F10 (11R20 20E22)},
  MRNUMBER = {4858206},
       DOI = {10.1080/00927872.2024.2389956},
       URL = {https://doi.org/10.1080/00927872.2024.2389956},
}

@misc{alberts2025inductivemethodscountingnumber,
      title={Inductive methods for counting number fields}, 
      author={Brandon Alberts and Robert J. Lemke Oliver and Jiuya Wang and Melanie Matchett Wood},
      year={2025},
      eprint={2501.18574},
      archivePrefix={arXiv},
      primaryClass={math.NT},
      url={https://arxiv.org/abs/2501.18574}, 
}

@misc{alberts2025twisted,
      title={Power Savings for Counting (Twisted) Abelian Extensions of Number Fields}, 
      author={Brandon Alberts},
      year={2025},
      eprint={2402.03475},
      archivePrefix={arXiv},
      primaryClass={math.NT},
      url={https://arxiv.org/abs/2402.03475}, 
}

@article {MR4219215,
    AUTHOR = {Wang, Jiuya},
     TITLE = {Malle's conjecture for {$S_n \times A$} for {$n=3,4,5$}},
   JOURNAL = {Compos. Math.},
  FJOURNAL = {Compositio Mathematica},
    VOLUME = {157},
      YEAR = {2021},
    NUMBER = {1},
     PAGES = {83--121},
      ISSN = {0010-437X,1570-5846},
   MRCLASS = {11R29 (11N45 11R45)},
  MRNUMBER = {4219215},
MRREVIEWER = {J\"urgen\ Kl\"uners},
       DOI = {10.1112/s0010437x20007587},
       URL = {https://doi.org/10.1112/s0010437x20007587},
}

@article {MR4594565,
    AUTHOR = {Barquero-Sanchez, Adrian and Masri, Riad and Thorne, Frank},
     TITLE = {The distribution of {$G$}-{W}eyl {CM} fields and the {C}olmez
              conjecture},
   JOURNAL = {Res. Math. Sci.},
  FJOURNAL = {Research in the Mathematical Sciences},
    VOLUME = {10},
      YEAR = {2023},
    NUMBER = {2},
     PAGES = {Paper No. 25, 25},
      ISSN = {2522-0144,2197-9847},
   MRCLASS = {11N45 (11R45)},
  MRNUMBER = {4594565},
MRREVIEWER = {Peter\ Koymans},
       DOI = {10.1007/s40687-023-00389-9},
       URL = {https://doi.org/10.1007/s40687-023-00389-9},
}

@article {MR2904935,
    AUTHOR = {Kl\"uners, J\"urgen},
     TITLE = {The distribution of number fields with wreath products as
              {G}alois groups},
   JOURNAL = {Int. J. Number Theory},
  FJOURNAL = {International Journal of Number Theory},
    VOLUME = {8},
      YEAR = {2012},
    NUMBER = {3},
     PAGES = {845--858},
      ISSN = {1793-0421,1793-7310},
   MRCLASS = {11R29 (11R16 11R32)},
  MRNUMBER = {2904935},
MRREVIEWER = {\v Stefan\ Porubsk\'y},
       DOI = {10.1142/S1793042112500492},
       URL = {https://doi.org/10.1142/S1793042112500492},
}

@article {MR2183288,
    AUTHOR = {Bhargava, Manjul},
     TITLE = {The density of discriminants of quartic rings and fields},
   JOURNAL = {Ann. of Math. (2)},
  FJOURNAL = {Annals of Mathematics. Second Series},
    VOLUME = {162},
      YEAR = {2005},
    NUMBER = {2},
     PAGES = {1031--1063},
      ISSN = {0003-486X,1939-8980},
   MRCLASS = {11R45 (11R16 11R29)},
  MRNUMBER = {2183288},
MRREVIEWER = {F.\ Diaz y Diaz},
       DOI = {10.4007/annals.2005.162.1031},
       URL = {https://doi.org/10.4007/annals.2005.162.1031},
}

@article {MR2641942,
    AUTHOR = {Belabas, Karim and Bhargava, Manjul and Pomerance, Carl},
     TITLE = {Error estimates for the {D}avenport-{H}eilbronn theorems},
   JOURNAL = {Duke Math. J.},
  FJOURNAL = {Duke Mathematical Journal},
    VOLUME = {153},
      YEAR = {2010},
    NUMBER = {1},
     PAGES = {173--210},
      ISSN = {0012-7094,1547-7398},
   MRCLASS = {11R29 (11R11 11R16 11R45)},
  MRNUMBER = {2641942},
MRREVIEWER = {Cornelius\ Greither},
       DOI = {10.1215/00127094-2010-007},
       URL = {https://doi.org/10.1215/00127094-2010-007},
}

@article {MR2745272,
    AUTHOR = {Bhargava, Manjul},
     TITLE = {The density of discriminants of quintic rings and fields},
   JOURNAL = {Ann. of Math. (2)},
  FJOURNAL = {Annals of Mathematics. Second Series},
    VOLUME = {172},
      YEAR = {2010},
    NUMBER = {3},
     PAGES = {1559--1591},
      ISSN = {0003-486X,1939-8980},
   MRCLASS = {11R29 (11R21 11R45)},
  MRNUMBER = {2745272},
MRREVIEWER = {Jerzy\ Browkin},
       DOI = {10.4007/annals.2010.172.1559},
       URL = {https://doi.org/10.4007/annals.2010.172.1559},
}

@article {MR3090184,
    AUTHOR = {Bhargava, Manjul and Shankar, Arul and Tsimerman, Jacob},
     TITLE = {On the {D}avenport-{H}eilbronn theorems and second order
              terms},
   JOURNAL = {Invent. Math.},
  FJOURNAL = {Inventiones Mathematicae},
    VOLUME = {193},
      YEAR = {2013},
    NUMBER = {2},
     PAGES = {439--499},
      ISSN = {0020-9910,1432-1297},
   MRCLASS = {11R16 (11R11 11R29 11R45)},
  MRNUMBER = {3090184},
MRREVIEWER = {St\'ephane\ R.\ Louboutin},
       DOI = {10.1007/s00222-012-0433-0},
       URL = {https://doi.org/10.1007/s00222-012-0433-0},
}

@article {MR4155220,
    AUTHOR = {Bhargava, M. and Shankar, A. and Taniguchi, T. and Thorne, F.
              and Tsimerman, J. and Zhao, Y.},
     TITLE = {Bounds on 2-torsion in class groups of number fields and
              integral points on elliptic curves},
   JOURNAL = {J. Amer. Math. Soc.},
  FJOURNAL = {Journal of the American Mathematical Society},
    VOLUME = {33},
      YEAR = {2020},
    NUMBER = {4},
     PAGES = {1087--1099},
      ISSN = {0894-0347,1088-6834},
   MRCLASS = {11N45 (11G05 11R29)},
  MRNUMBER = {4155220},
MRREVIEWER = {Olivier\ Bordell\`es},
       DOI = {10.1090/jams/945},
       URL = {https://doi.org/10.1090/jams/945},
}

@article {MR4768704,
    AUTHOR = {Bhargava, Manjul and Taniguchi, Takashi and Thorne, Frank},
     TITLE = {Improved error estimates for the {D}avenport-{H}eilbronn
              theorems},
   JOURNAL = {Math. Ann.},
  FJOURNAL = {Mathematische Annalen},
    VOLUME = {389},
      YEAR = {2024},
    NUMBER = {4},
     PAGES = {3471--3512},
      ISSN = {0025-5831,1432-1807},
   MRCLASS = {11N45 (11M41 11R45)},
  MRNUMBER = {4768704},
MRREVIEWER = {Henry\ H.\ Kim},
       DOI = {10.1007/s00208-023-02684-w},
       URL = {https://doi.org/10.1007/s00208-023-02684-w},
}

@article {MR4808549,
    AUTHOR = {McGown, Kevin J. and Tucker, Amanda},
     TITLE = {An improved error term for counting {$D_4$}-quartic fields},
   JOURNAL = {Bull. Lond. Math. Soc.},
  FJOURNAL = {Bulletin of the London Mathematical Society},
    VOLUME = {56},
      YEAR = {2024},
    NUMBER = {9},
     PAGES = {2874--2885},
      ISSN = {0024-6093,1469-2120},
   MRCLASS = {11N45 (11R16 11R45)},
  MRNUMBER = {4808549},
MRREVIEWER = {Volker\ Ziegler},
       DOI = {10.1112/blms.13106},
       URL = {https://doi.org/10.1112/blms.13106},
}

@article{35b3f195-a21d-3d48-b94d-d206914c380e,
 ISSN = {00804630},
 URL = {http://www.jstor.org/stable/77760},
 abstract = {An asymptotic formula is proved for the number of cubic fields of discriminant δ in 0 < δ < X; and in -X < δ < 0.},
 author = {H. Davenport and H. Heilbronn},
 journal = {Proceedings of the Royal Society of London. Series A, Mathematical and Physical Sciences},
 number = {1551},
 pages = {405--420},
 publisher = {The Royal Society},
 title = {On the Density of Discriminants of Cubic Fields. II},
 urldate = {2025-07-25},
 volume = {322},
 year = {1971}
}

@article {MR1918290,
    AUTHOR = {Cohen, Henri and Diaz y Diaz, Francisco and Olivier, Michel},
     TITLE = {Enumerating quartic dihedral extensions of {$\Bbb Q$}},
   JOURNAL = {Compositio Math.},
  FJOURNAL = {Compositio Mathematica},
    VOLUME = {133},
      YEAR = {2002},
    NUMBER = {1},
     PAGES = {65--93},
      ISSN = {0010-437X,1570-5846},
   MRCLASS = {11R16 (11R29 11R45 11Y40)},
  MRNUMBER = {1918290},
MRREVIEWER = {T.\ Mets\"ankyl\"a},
       DOI = {10.1023/A:1016310902973},
       URL = {https://doi.org/10.1023/A:1016310902973},
}

@article {MR4188678,
    AUTHOR = {Pollack, Paul},
     TITLE = {The maximal size of the {$k$}-fold divisor function for very
              large {$k$}},
   JOURNAL = {J. Ramanujan Math. Soc.},
  FJOURNAL = {Journal of the Ramanujan Mathematical Society},
    VOLUME = {35},
      YEAR = {2020},
    NUMBER = {4},
     PAGES = {341--345},
      ISSN = {0970-1249,2320-3110},
   MRCLASS = {11N56 (11N37 11N64)},
  MRNUMBER = {4188678},
MRREVIEWER = {Joshua\ Zelinsky},
}

@incollection {MR1330934,
    AUTHOR = {Schmidt, Wolfgang M.},
     TITLE = {Number fields of given degree and bounded discriminant},
      NOTE = {Columbia University Number Theory Seminar (New York, 1992)},
   JOURNAL = {Ast\'erisque},
  FJOURNAL = {Ast\'erisque},
    NUMBER = {228},
      YEAR = {1995},
     PAGES = {4, 189--195},
      ISSN = {0303-1179,2492-5926},
   MRCLASS = {11R47 (11R29)},
  MRNUMBER = {1330934},
MRREVIEWER = {Bart\ de Smit},
}

@misc{masri2020mallesconjecturegtimes,
      title={Malle's Conjecture for ${G} \times {A}$, with ${G} = {S}_3, {S}_4, {S}_5$}, 
      author={Riad Masri and Frank Thorne and Wei-Lun Tsai and Jiuya Wang},
      year={2020},
      eprint={2004.04651},
      archivePrefix={arXiv},
      primaryClass={math.NT},
      url={https://arxiv.org/abs/2004.04651}, 
}

@book {MR195803,
    AUTHOR = {Borevich, A. I. and Shafarevich, I. R.},
     TITLE = {Number theory},
    SERIES = {Pure and Applied Mathematics},
    VOLUME = {Vol. 20},
      NOTE = {Translated from Russian by Newcomb Greenleaf},
 PUBLISHER = {Academic Press, New York-London},
      YEAR = {1966},
     PAGES = {x+435},
   MRCLASS = {10.00},
  MRNUMBER = {195803},
}

@article {MR2068887,
    AUTHOR = {Malle, Gunter},
     TITLE = {On the distribution of {G}alois groups. {II}},
   JOURNAL = {Experiment. Math.},
  FJOURNAL = {Experimental Mathematics},
    VOLUME = {13},
      YEAR = {2004},
    NUMBER = {2},
     PAGES = {129--135},
      ISSN = {1058-6458,1944-950X},
   MRCLASS = {11R32 (11R47 12F10)},
  MRNUMBER = {2068887},
MRREVIEWER = {F.\ Diaz y Diaz},
       URL = {http://projecteuclid.org/euclid.em/1090350928},
}

@article {MR1884706,
    AUTHOR = {Malle, Gunter},
     TITLE = {On the distribution of {G}alois groups},
   JOURNAL = {J. Number Theory},
  FJOURNAL = {Journal of Number Theory},
    VOLUME = {92},
      YEAR = {2002},
    NUMBER = {2},
     PAGES = {315--329},
      ISSN = {0022-314X,1096-1658},
   MRCLASS = {12F12 (11R32 11R47 12F10)},
  MRNUMBER = {1884706},
MRREVIEWER = {Helmut\ V\"olklein},
       DOI = {10.1006/jnth.2001.2713},
       URL = {https://doi.org/10.1006/jnth.2001.2713},
}

@article {MR4493242,
    AUTHOR = {Bhargava, Manjul and Shankar, Arul and Wang, Xiaoheng},
     TITLE = {An improvement on {S}chmidt's bound on the number of number
              fields of bounded discriminant and small degree},
   JOURNAL = {Forum Math. Sigma},
  FJOURNAL = {Forum of Mathematics. Sigma},
    VOLUME = {10},
      YEAR = {2022},
     PAGES = {Paper No. e86, 13},
      ISSN = {2050-5094},
   MRCLASS = {11N45 (11R04 11R09 53A07)},
  MRNUMBER = {4493242},
MRREVIEWER = {Peter\ Koymans},
       DOI = {10.1017/fms.2022.71},
       URL = {https://doi.org/10.1017/fms.2022.71},
}

@article {MR969545,
    AUTHOR = {Wright, David J.},
     TITLE = {Distribution of discriminants of abelian extensions},
   JOURNAL = {Proc. London Math. Soc. (3)},
  FJOURNAL = {Proceedings of the London Mathematical Society. Third Series},
    VOLUME = {58},
      YEAR = {1989},
    NUMBER = {1},
     PAGES = {17--50},
      ISSN = {0024-6115,1460-244X},
   MRCLASS = {11R29 (11R37 11R45)},
  MRNUMBER = {969545},
MRREVIEWER = {Boris\ Datskovsky},
       DOI = {10.1112/plms/s3-58.1.17},
       URL = {https://doi.org/10.1112/plms/s3-58.1.17},
}

@article {MR4377858,
    AUTHOR = {Alberts, Brandon},
     TITLE = {Statistics of the first {G}alois cohomology group: a
              refinement of {M}alle's conjecture},
   JOURNAL = {Algebra Number Theory},
  FJOURNAL = {Algebra \& Number Theory},
    VOLUME = {15},
      YEAR = {2021},
    NUMBER = {10},
     PAGES = {2513--2569},
      ISSN = {1937-0652,1944-7833},
   MRCLASS = {11N45 (11R21 11R32 11R34)},
  MRNUMBER = {4377858},
MRREVIEWER = {Peter\ Koymans},
       DOI = {10.2140/ant.2021.15.2513},
       URL = {https://doi.org/10.2140/ant.2021.15.2513},
}

@article {MR4298097,
    AUTHOR = {Alberts, Brandon and O'Dorney, Evan},
     TITLE = {Harmonic analysis and statistics of the first {G}alois
              cohomology group},
   JOURNAL = {Res. Math. Sci.},
  FJOURNAL = {Research in the Mathematical Sciences},
    VOLUME = {8},
      YEAR = {2021},
    NUMBER = {3},
     PAGES = {Paper No. 50, 16},
      ISSN = {2522-0144,2197-9847},
   MRCLASS = {11N45 (11M45 11R34 43A25)},
  MRNUMBER = {4298097},
MRREVIEWER = {Claudio\ Stirpe},
       DOI = {10.1007/s40687-021-00283-2},
       URL = {https://doi.org/10.1007/s40687-021-00283-2},
}

@incollection {MR4774808,
    AUTHOR = {Bucur, Alina and Florea, Alexandra and Serrano L\'opez,
              Allechar and Varma, Ila},
     TITLE = {Power-saving error terms for the number of {$D_4$}-quartic
              extensions over a number field ordered by discriminant},
 BOOKTITLE = {Research directions in number theory},
    SERIES = {Assoc. Women Math. Ser.},
    VOLUME = {33},
     PAGES = {197--218},
 PUBLISHER = {Springer, Cham},
      YEAR = {2024},
      ISBN = {978-3-031-51676-4; 978-3-031-51677-1},
   MRCLASS = {11R45 (11R16)},
  MRNUMBER = {4774808},
MRREVIEWER = {Jaitra\ Chattopadhyay},
       DOI = {10.1007/978-3-031-51677-1\_6},
       URL = {https://doi.org/10.1007/978-3-031-51677-1_6},
}

@article {MR1836927,
    AUTHOR = {Roberts, David P.},
     TITLE = {Density of cubic field discriminants},
   JOURNAL = {Math. Comp.},
  FJOURNAL = {Mathematics of Computation},
    VOLUME = {70},
      YEAR = {2001},
    NUMBER = {236},
     PAGES = {1699--1705},
      ISSN = {0025-5718,1088-6842},
   MRCLASS = {11R16 (11N45 11R29 11R47 11Y35)},
  MRNUMBER = {1836927},
MRREVIEWER = {Michel\ Olivier},
       DOI = {10.1090/S0025-5718-00-01291-6},
       URL = {https://doi.org/10.1090/S0025-5718-00-01291-6},
}

@article {MR936994,
    AUTHOR = {Datskovsky, Boris and Wright, David J.},
     TITLE = {Density of discriminants of cubic extensions},
   JOURNAL = {J. Reine Angew. Math.},
  FJOURNAL = {Journal f\"ur die Reine und Angewandte Mathematik. [Crelle's
              Journal]},
    VOLUME = {386},
      YEAR = {1988},
     PAGES = {116--138},
      ISSN = {0075-4102,1435-5345},
   MRCLASS = {11R29 (11R11 11R16)},
  MRNUMBER = {936994},
MRREVIEWER = {Brian\ Peterson},
       DOI = {10.1515/crll.1988.386.116},
       URL = {https://doi.org/10.1515/crll.1988.386.116},
}

@article {MR1200974,
    AUTHOR = {M\"aki, Sirpa},
     TITLE = {The conductor density of abelian number fields},
   JOURNAL = {J. London Math. Soc. (2)},
  FJOURNAL = {Journal of the London Mathematical Society. Second Series},
    VOLUME = {47},
      YEAR = {1993},
    NUMBER = {1},
     PAGES = {18--30},
      ISSN = {0024-6107,1469-7750},
   MRCLASS = {11R45 (11R20 11R29)},
  MRNUMBER = {1200974},
MRREVIEWER = {M.\ Ram\ Murty},
       DOI = {10.1112/jlms/s2-47.1.18},
       URL = {https://doi.org/10.1112/jlms/s2-47.1.18},
}

@article {MR3264252,
    AUTHOR = {Shankar, Arul and Tsimerman, Jacob},
     TITLE = {Counting {$S_5$}-fields with a power saving error term},
   JOURNAL = {Forum Math. Sigma},
  FJOURNAL = {Forum of Mathematics. Sigma},
    VOLUME = {2},
      YEAR = {2014},
     PAGES = {Paper No. e13, 8},
      ISSN = {2050-5094},
   MRCLASS = {11N36 (11R21 11R29 11R45 12F10)},
  MRNUMBER = {3264252},
MRREVIEWER = {Mitsuko\ Horie},
       DOI = {10.1017/fms.2014.10},
       URL = {https://doi.org/10.1017/fms.2014.10},
}

@article {MR3127806,
    AUTHOR = {Taniguchi, Takashi and Thorne, Frank},
     TITLE = {Secondary terms in counting functions for cubic fields},
   JOURNAL = {Duke Math. J.},
  FJOURNAL = {Duke Mathematical Journal},
    VOLUME = {162},
      YEAR = {2013},
    NUMBER = {13},
     PAGES = {2451--2508},
      ISSN = {0012-7094,1547-7398},
   MRCLASS = {11R16 (11R29 11R45)},
  MRNUMBER = {3127806},
MRREVIEWER = {Th\cfac ong\ Nguy\cftil en-Quang-\Dbar\cftil o},
       DOI = {10.1215/00127094-2371752},
       URL = {https://doi.org/10.1215/00127094-2371752},
}

@article {MR2135320,
    AUTHOR = {Kl\"uners, J\"urgen},
     TITLE = {A counterexample to {M}alle's conjecture on the asymptotics of
              discriminants},
   JOURNAL = {C. R. Math. Acad. Sci. Paris},
  FJOURNAL = {Comptes Rendus Math\'ematique. Acad\'emie des Sciences. Paris},
    VOLUME = {340},
      YEAR = {2005},
    NUMBER = {6},
     PAGES = {411--414},
      ISSN = {1631-073X,1778-3569},
   MRCLASS = {11R45 (11R32 12F10 12F12)},
  MRNUMBER = {2135320},
MRREVIEWER = {T.\ Mets\"ankyl\"a},
       DOI = {10.1016/j.crma.2005.02.010},
       URL = {https://doi.org/10.1016/j.crma.2005.02.010},
}
\bibliographystyle{plain}
\end{document}